\newtheorem{theorem}{Theorem}[section]
\newtheorem{proposition}[theorem]{Proposition} 
\newtheorem{lemma}[theorem]{Lemma}
\newtheorem{fact}[theorem]{Fact}
\theoremstyle{definition}
\newtheorem{definition}[theorem]{Definition}
\newtheorem{notation}[theorem]{Notation}
\theoremstyle{remark}
\newcommand{\defemp}{\textit}
\newcommand{\forces}{\Vdash}
\newcommand{\zfc}{\textrm{ZFC}}
\newcommand{\zf}{\textrm{ZF}}
\newcommand{\mc}{\mathcal}
\newcommand{\mbb}{\mathbb}
\newcommand{\p}{\mathcal{P}}
\newcommand{\baire}{{{^\omega}\omega}}
\newcommand{\bairenodes}{{^{<\omega}\omega}}
\newcommand{\cantornodes}{{^{<\omega}2}}
\newcommand{\im}{\textnormal{Im}}
\newcommand{\dom}{\textnormal{Dom}}
\title[Sacks Forcing and the Shrink Wrapping Property]
 {Sacks Forcing and \\ the Shrink Wrapping Property}
\author{Osvaldo Guzm\'{a}n Gonz\'{a}lez}
\address{Mathematics Department\\
 University of Toronto\\
 40 St. George St., Toronto Canada}
\email{oguzman@math.utoronto.ca}
\author{Dan Hathaway}
\address{Mathematics Department\\
University of Vermont\\
16 Colchester Ave., Burlington VT U.S.A.}
\email{danhath@umich.edu}
\begin{document}

\begin{abstract}
We consider a property stronger than
 the Sacks property,
 called the
 \textit{shrink wrapping} property,
 which holds between
 the ground model and each Sacks forcing extension.
Unlike the Sacks property, the shrink wrapping property
 does not hold between the ground model
 and a Silver forcing extension.
We also show an application
 of the shrink wrapping property.
\end{abstract}

\maketitle

\section{The Shrink Wrapping Property}

Within this section,
 we will define the \textit{shrink wrapping property},
 which is a strengthening of the Sacks property,
 which holds between each Sacks forcing extension
 and the ground model,
 but not between any Silver forcing extension
 and the ground model.

% \begin{definition}
% Given a tree $T \subseteq \bairenodes$
%  (or $T \subseteq {^{<\omega}2})$
%  and a node $t \in T$,
%  by $T | t$ we mean the tree $T$ restricted to $t$:
%  $$T|t := \{ s \in T : s \sqsubseteq t \mbox{ or    }
%  s \sqsupseteq t \}.$$
% \end{definition}

\begin{definition}
%Given a transitive model $M$ of $\zf$ and
Given a function $f : \omega \to (\omega - \{0\})$,
 we say that a tree
 $T \subseteq \bairenodes$
 \defemp{obeys} $f$ iff
 for each $l \in \omega$,
 the set $$\{ n \in \omega :
 t^\frown n \in T \mbox{ for some }
 t \mbox{ on level } l \mbox{ of } T \}$$
 has size $\le f(l)$.
\end{definition}

\begin{definition}
Let $M$ be a transitive model of $\zf$.
The \defemp{Sacks property} holds between $V$ and $M$
 iff given any function $f : \omega \to (\omega - \{0\})$
 in $M$ satisfying $\lim_{l \to \infty} f(l) = \infty$
 and given any $x \in \baire$ (in $V$),
 there is some tree $T \in M$ which obeys $f$
 such that $x \in [T]$.
\end{definition}

The Sacks property as we have just defined it is equivalent
 to the version where we only consider a single such function
 $f$ in $M$, instead of all such functions.
Suppose that $V$ is a Sacks forcing extension of
 a model $M$.
Then the \textit{Sacks property} holds between $V$ and $M$.

Now, if $V$ is a Sacks forcing extension of $M$
 and $\langle x_n: n < \omega \rangle$
 is a sequence
 of reals (in $V$) 
 then we cannot expect
 there to be a single function $f$ in $M$
 and a sequence of trees
 $\langle T_n  : n \in \omega \rangle$
 in $M$ such that for each $n$,
 $T_n$ obeys $f$ and $x_n \in [T_n]$.
To see why this is case,
 suppose that all $T_n$ obey the same function $f$.
Let $k = f(0)$.
Then $\mathcal{S} := \langle x_n(0) : n < \omega \rangle$
 is such that there is a function
 $g : \omega \to [\omega]^k$ in $M$ satisfying
 $(\forall n \in \omega)\, x_n(0) \in g(n)$.
However, $\mathcal{S}$ could be such that this
 is impossible.

To remedy this problem, fix the following
 sequence of functions:
% \begin{definition}
% Fix $\langle f_i : i < \omega \rangle$,
%  where each $f_i : \omega \to (\omega - \{0\})$
%  satisfies $\lim_{l \to \infty} f_i(l) = \infty$,
%  and the sequence itself is such that
%  $( i, l ) \mapsto f_i(l)$ is an injection.
% \end{definition}
\begin{notation}
By $\langle f_i : i < \omega \rangle$,
 we will denote a sequence of functions such that $f_i$ is a function from $\omega \to (\omega - \{0\})$
 and $(i,l) \mapsto f_i(l)$ is computable and finite-to-one.
% satisfies $\lim_{l \to \infty} f_i(l) = \infty$,
% and the sequence itself is such that
% $( i, l ) \mapsto f_i(l)$ is an injection.
\end{notation}
%We can take this sequence to be computable,
Since $(i,l) \mapsto f_i(l)$ is computable,
 it is contained in every model of $\zf$.
% so that it is contained in every model of $\zf$.
Note that $(i,l) \mapsto f_i(l)$ being finite-to-one
 implies that for each fixed $i$,
 $\lim_{l \to \infty} f_i(l) = \infty$.
Also, if $\langle (i_j, l_j) : j < \omega \rangle$
 is any enumeration of $\omega \times \omega$
 without repetation, then
$$\lim_{j \to \infty} f_{i_j}(l_j) = \infty.$$
This allows us to use the Sacks property.
Here is what we mean:

Suppose the Sacks property holds
 between $V$ and $M$
 and $\langle x_i : i \in \omega \rangle$
 is any sequence of reals.
Then there is a sequence
 of trees $\langle T_i : i < \omega \rangle \in M$
 such that $(\forall n < \omega)\,$
 $T_i$ obeys $f_i$ and $x_i \in [T_i]$.
%The idea is that if the Sacks property holds
This is because by the Sacks property,
 using the enumeration $\langle (i_j, l_j) : j < \omega \rangle$ above,
 there is a function $h : \omega \to [\omega]^{<\omega}$
 such that for each $j < \omega$,
 $|h(j)| = f_{i_j}(l_j)$ and
 $x_{i_j}(l_j) \in h(j)$.
The function $h$ can then be used to create
 the sequence of trees $T_i$ for $i < \omega$.

The following is a stronger property that we might want
 to hold between $V$ and $M$:
 for every sequence of reals 
 $\mc{X} = \langle x_n  : n < \omega \rangle$
 there exists a sequence of trees
 $\langle T_n  :
 n < \omega \rangle \in M$ such that
\begin{itemize}
\item[1)] $(\forall n \in \omega)$
 $T_n$ obeys $f_n$ and $x_n \in [T_n]$;
\item[2)] $(\forall n_1, n_2 \in \omega)\,$ one of the following holds:
\begin{itemize}
\item[a)] $x_{n_1} = x_{n_2}$;
\item[b)] $[T_{n_1}] \cap [T_{n_2}] = \emptyset$.
\end{itemize}
\end{itemize}
Unfortunately, if the sequence
 $\mc{X}$ satisfies
 $$\langle (n_1, n_2) : x_{n_1} = x_{n_2} \rangle \not\in M,$$
 then there can be no such sequence of trees in $M$.
Thus, we need a weaker notion:
 a \textit{shrink wrapper}.

\begin{notation}
By $\eta : \omega \to [\omega]^2$ we will denote a computable bijection 
 so that for each
 $\tilde{n} \in \omega$,
 we may talk about the
 $\tilde{n}$-th pair
 $\eta(\tilde{n}) \in [\omega]^2$.
\end{notation}
The idea of a shrink wrapper
 is that for each
 $\{ n_1, n_2 \} = \eta(\tilde{n}) \in [\omega]^2$,
 the functions
 $F_{\tilde{n},n_1}$ and
 $F_{\tilde{n},n_2}$,
 together with the finite sets
 $I(n_1)$ and $I(n_2)$,
 will separate $x_{n_1}$
 and $x_{n_2}$ as much as possible.
For $n \in \eta(\tilde{n})$,
 the function
 $F_{\tilde{n},n} : {^{\tilde{n}}2} \to \p(\bairenodes)$
 is shrink-wrapping $2^{\tilde{n}}$
 possibilities for the value of $x_n$.
We need to make sure that
 what contains one possibility for $x_{n_1}$
 is sufficiently disjoint from
 what contains another possibility for $x_{n_2}$,
 even if it is not possible that
 simultaneously both $x_{n_1}$
 and $x_{n_2}$
 are in the respective containers.

%The main idea of shrink-wrapping
% is that for a fixed $\tilde{n}$,
% if $\{n_1, n_2\}$ is the $\tilde{n}$-th pair,
% the trees $F_{\tilde{n},n_1}(s)$ for
% $s \in {^{\tilde{n}} 2}$ and
% $F_{\tilde{n},n_2}(s)$ for $s \in {^{\tilde{n}} 2}$
% separate $x_{n_1}$ from $x_{n_2}$ as much
% as possible.
Fix $\tilde{n} \in \omega$
 and consider the $\tilde{n}$-th pair
 $\{ n_1, n_2 \}$.
If $x_{n_1} = x_{n_2}$, they certainly
 cannot be separated and this
 is a special case.
Also,
 there are finitely many ``isolated'' points
 which might prevent the separation of
 $x_{n_1}$ from $x_{n_2}$.
In fact, we can get a finite set $I(k)$ of isolated
 points associated to each $x_k$ as opposed
 to each pair $\{ x_{n_1}, x_{n_2} \}$.

When we construct a shrink wrapper for
 a sequence of reals in a Sacks forcing extension,
 we can easily get the trees that occur in
 the shrink wrapper to obey functions in the ground
 model.
To facilitate this,
 we do the following:
\begin{notation}
By $\Phi : {^{<\omega}2} \times \omega \to \omega$ 
we will denote a fixed a finite-to-one function.
\end{notation}
In the definition of a shrink wrapper, we will have
 each $F_{\tilde{n},n}(s)$ be a tree which obeys
 $f_{\Phi(s,n)}$.
Thus, the definition of a shrink wrapper depends
 on the finite-to-one function $\Phi$ and the
 finite-to-one function
 $( i, l ) \mapsto f_i(l)$.
However, the reader can check that the
 choice of these two functions is not important,
 as long as they are both in the ground model
 and are both finite-to-one.
Note that
 $(s,n,l) \mapsto f_{\Phi(s,n)}(l)$ is finite-to-one,
 which allows us to use the Sacks property.

\begin{definition}
\label{srdef}
A \defemp{shrink wrapper} $\mc{W}$ for
a sequence of reals $\mc{X} = \langle x_n : n \in \omega \rangle$
 is a pair $\langle \mc{F}, I \rangle$ such that
 $I : \omega \to [\baire]^{<\omega}$ and
 $\mc{F}$ is a collection of functions
 $F_{\tilde{n},n}$ for $\tilde{n} \in \omega$
 and $n \in \eta(\tilde{n})$ which satisfy
 the following
 conditions.
\begin{itemize}
\item[1)]
Given $\tilde{n}$ and $n \in \eta(\tilde{n})$,
%$(\forall \tilde{n} \in \omega)
% (\forall n \in \eta(\tilde{n}))$
 $F_{\tilde{n},n} : {^{\tilde{n}}2} \to \p(\bairenodes)$ and
 for each $s \in {^{\tilde{n}}2}$,
 $F_{\tilde{n},n}(s) \subseteq \bairenodes$
is a leafless tree that obeys $f_{\Phi(s,n)}$.
\item[2)]
%$(\forall \tilde{n} \in \omega)
% (\forall n \in \eta(\tilde{n}))
Given $\tilde{n}$ and $n \in \eta(\tilde{n})$,
 $(\exists s \in {^{\tilde{n}}2})\,
% $(\exists s \in {^{\tilde{n}}2})\,
 x_n \in [F_{\tilde{n},n}(s)]$.
\end{itemize}
%For each $n \in \omega$,
% let $T_n \subseteq \bairenodes$ be the leafless tree
% $$T_n := \bigcap \{
% \bigcup \im(F_{\tilde{n},n}) :
% \tilde{n} \in \omega \wedge n \in \eta(\tilde{n}) \}.$$
\begin{itemize}
%\item[3)] $(\forall x \in I(n))$
% if $x \in [\bigcup \im(F_{\tilde{n},n})]$,
% then $x$
% is an isolated point of
% $[\bigcup \im(F_{\tilde{n},n})]$.
%\item[3)] TO DO: Get rid of this item!!!
\item[3)]
Given
 $\{n_1, n_2\} = \eta(\tilde{n})$, 
 $(\forall s_1, s_2 \in {^{\tilde{n}}2})$
 one of the following relationships holds
 between the sets
 $C_1 := [F_{\tilde{n},n_1}(s_1)]$ and
 $C_2 := [F_{\tilde{n},n_2}(s_2)]$:
\begin{itemize}
\item[3a)] $C_1 = C_2$ and if either
 $x_{n_1} \in C_1$ or
 $x_{n_2} \in C_2$, then
 $x_{n_1} = x_{n_2}$;
\item[3b)] $(\exists x \in I(n_1) \cap I(n_2))\,
 C_1 = C_2 = \{x\}$;
\item[3c)] $C_1 \cap C_2 = \emptyset$,
 and therefore $(\exists l \in \omega)
 (\forall (y_1, y_2) \in C_1 \times C_2)\,
 y_1$ and $y_2$ differ before level $l$.
% there is an $l \in \omega$ such that
% all elements of $C_1$ deviate from
% all elements of $C_2$ before level $l$.
%\item[a)] $[F_{\tilde{n},n_1}(s_1)] = [F_{\tilde{n},n_2}(s_2)]$
% and if either $x_{n_1} \in [F_{\tilde{n},n_1}(s_1)]$ or
% $x_{n_2} \in [F_{\tilde{n},n_2}(s_2)]$, then $x_1 = x_2$;
%\item[b)] $(\exists x \in I(n_1) \cap I(n_2))\,
% [F_{\tilde{n},n_1}(s_1)] = [F_{\tilde{n},n_2}(s_2)] = \{x\}$;
%\item[c)] $[F_{\tilde{n},n_1}(s_1)] \cap [F_{\tilde{n},n_2}(s_2)]
% = \emptyset.$
\end{itemize}
\end{itemize}
\end{definition}

The therefore part of 3c) is because if for each $l$
 there was a node on level $l$ of
 the tree $T :=
 F_{\tilde{n},n_1}(s_1) \cap
 F_{\tilde{n},n_2}(s_2)$,
 then because $T$ has finite branching,
 by K\"onig's lemma it would have an infinite branch.
%We will show that if $V$ is a Sacks forcing extension of $M$
% and $\mc{X} = \langle x_n \in \baire : n < \omega \rangle$ is in $V$,
% there is a shrink wrapper $\mc{W}$ for $\mc{X}$ in $M$.
When we construct a shrink wrapper,
 we can usually ensure that it
 satisfies the following additional property:
%Also in this situation, we can build the shrink wrapper to
% satisfy the following additional property
% for all $\tilde{n} \in \omega$ and $n \in \eta(\tilde{n})$:
\begin{itemize}
\item[4)]
Given $\tilde{n}$ and $n \in \eta(\tilde{n})$,
 $(\forall s_1, s_2 \in {^{\tilde{n}}2})$
 one of the following relationships holds between
 the sets $C_1 := [F_{\tilde{n},n}(s_1)]$
 and $C_2 := [F_{\tilde{n},n}(s_2)]$:
\begin{itemize}
\item[4a)] $(\exists x \in I(n))\, C_1 = C_2 = \{x\}$;
\item[4b)] $C_1 \cap C_2 = \emptyset$,
 and therefore $(\exists l \in \omega)
 (\forall (y_1, y_2) \in C_1 \times C_2)\,
 y_1$ and $y_2$ differ before level $l$.
\end{itemize}
\end{itemize}
Note this is a requirement
 on the single function $F_{\tilde{n},n}$
 where $n \in \eta(\tilde{n})$,
 and not a requirement on the pair of functions
 $( F_{\tilde{n},n_1}, F_{\tilde{n},n_2} )$
 where $\{n_1, n_2\} = \eta(\tilde{n})$.

\begin{definition}
Given a model $M$ of $\zfc$,
 we say that the \defemp{shrink wrapping property}
 holds between $M$ and $V$ iff
 every sequence of reals $\mc{X} = \langle x_n :
 n \in \omega \rangle$ has
 a shrink wrapper $\mc{W}$ in $M$.
A forcing $\mbb{P}$ has the shrink wrapping property
 iff the shrink wrapping property holds between
 the ground model and each forcing extension.
\end{definition}

In Theorem~\ref{mainsacksforcingargument}
 we will show that Sacks forcing has the
 shrink wrapping property.
If a forcing has the shrink wrapping property,
 then it automatically has the Sacks property.
%That is, consider any real $x$ in the forcing extension.
%Now consider any sequence $\mc{X} = \langle x_n : n \in \omega \rangle$
% such that $x_0 = x$.
%Let $\tilde{n}$ be such that $0 \in \eta(\tilde{n})$.
%Let $\mc{W}$ be a shrink wrapper for $\mc{X}$
% in the ground model.
%We have that $x_0$ is a path through the tree
% $$\bigcup \{ F_{\tilde{n},0}(s) : s \in {^{\tilde{n}}2} \},$$
% and this tree obeys the function
% $$l \mapsto \sum_{s \in {^{\tilde{n}}2}} f_{\Phi(s,0)}(l),$$
% which is in $M$
% (and does not depend on $\mc{X}$).
That is, consider any real $x$ in the forcing extension.
Consider the sequence $\mc{X} = \langle x_n : n \in \omega \rangle$
 where $x_n = x$ for all $n$.
Let $\mc{W}$ be a shrink wrapper for $\mc{X}$
 and let $m \in \eta(0)$.
Now $x = x_m \in [F_{0,m}(\emptyset)]$,
 because $\emptyset$ is the only $s$ in ${^0 2}$.
Furthermore, $F_{0,m}(\emptyset) \subseteq \bairenodes$
 is a tree that
 obeys $f_{\Phi(\emptyset,0)}$
 (and that function does not depend on $\mc{X}$).

\section{Application to Pointwise Eventual Domination}

Before we show that there is always a shrink wrapper
 in the ground model after doing Sacks forcing,
 let us discuss an application of shrink wrappers themselves.
Given two functions $f, g : \baire \to \baire$,
 let us write $f \le^* g$ and say that $g$
 \textit{pointwise eventually dominates} $f$ iff
 $$(\forall x \in \baire)(\forall ^\infty n)\, f(x)(n) \le g(x)(n).$$
One may ask what is the cofinality of the set of Borel functions
 from $\baire$ to $\baire$ ordered by $\le^*$.
The answer is $2^\omega$,
 which follows from the result in \cite{Em} that
 given any $A \subseteq \omega$, there is a Baire class one
 (and therefore Borel) function $f_A : \baire \to \baire$
 such that given any Borel $g : \baire \to \baire$
 satisfying $f_A \le^* g$,
 we have that $A$ is $\Delta^1_1$ in any code for $g$.
One may ask what functions $f_A$ have such a property.

Being precise, say that a function $f : \baire \to \baire$
 \textit{sufficiently encodes} $A \subseteq \omega$ iff
 whenever $g : \baire \to \baire$ is Borel and 
 satisfies $f \le^* g$, then
 $A \in L[c]$ where $c$ is any code for $g$.
What must a function do to sufficiently encode $A$?
Given a sequence of reals 
 $\mc{X} = \langle x_n : n < \omega \rangle$,
 let us write $f_\mc{X} : \baire \to \baire$ for the function
 $$f_{\mc{X}}(x)(n) := \begin{cases}
 \min \{ l : x(l) \not= x_n(l) \} & \mbox{if } x \not= x_n, \\
 0 & \mbox{otherwise. }
 \end{cases}$$
Given $A \subseteq \omega$,
 is there always some $\mc{X}$ such that $f_\mc{X}$
 sufficiently encodes $A$?
It might seem like the answer is yes,
 because if a Borel function $g : \baire \to \omega$
 everywhere dominates one of the sections
 $x \mapsto f_\mc{X}(x)(n)$,
 then $x_n$ is $\Delta^1_1$ in any code for $g$
 \cite{Em}.

However, using a shrink wrapper, we can show that
 consistently there is not always a function of the form
 $f_\mc{X}$ that sufficiently encodes $A$.
Specifically,
 suppose $V$ is a Sacks forcing extension of
 an inner model $M$, $A \not\in M$,
 and $\mc{X}$ is a sequence of reals.
In the next section,
 we will show that there is
 a shrink wrapper $\mc{W} \in M$ for $\mc{X}$.
In this section we will show how to build from $\mc{W}$
 a Borel function $g : \baire \to \baire$,
 with a code $c \in M$, satisfying
 $f_\mc{X} \le^* g$.
% (which we will show in this section
% can then be used to build a Borel function
% $g : \baire \to \baire$, with a code in $M$, satisfing
% $f_\mc{X} \le^* g$).
Since $c \in M$,
 also $L[c] \subseteq M$,
 which implies $A \not\in L[c]$.
Hence, $f_\mc{X}$ does not sufficiently encode $A$.

To facilitate the discussion, let us make the following
 definitions:
\begin{definition}
Give $x \in \baire$,
 $[[x]] \subseteq \bairenodes$
 is the set of all finite
 initial segments of $x$.
\end{definition}
\begin{definition}
Given a tree $T \subseteq \bairenodes$,
 $\mbox{Exit}(T) : \baire \to \omega$ is the function
 $$\mbox{Exit}(T)(x) :=
 \begin{cases}
 \min \{ l : x \restriction l \not\in T \}
 & \mbox{if } x \not\in [T], \\
 0 & \mbox{if } x \in [T].
 \end{cases}$$
\end{definition}
For the remainder of this section we will show that if
 $M$ is a transitive model of $\zf$
 and a sequence $\mc{X}$ of reals has a
 shrink wrapper in $M$,
 then there is a Borel function $g$ with a code in $M$
 such that $f_\mc{X} \le^* g$.
We will illustrate the main ideas
 by considering a situation where $M$ contains
 something stronger than a shrink wrapper
 for $\mc{X}$.

\begin{proposition}
\label{treeavoid}
Let $M$ be a transitive model of $\zf$.
Let $$\mc{X} =
 \langle x_n  : n \in \omega \rangle$$ be a sequence of reals.
%Suppose $V$ is ${{^\omega}\omega}$-bounding over $M$.
Suppose
 $$\mc{T} = \langle T_n : n \in \omega \rangle \in M$$ is a sequence
 of subtrees of $\bairenodes$ satisfying
 the following:
\begin{itemize}
\item[1)] $(\forall n \in \omega)\, x_n \in [T_n]$.
\item[2)] $(\forall n_1, n_2 \in \omega)$
 one of the following holds:
\begin{itemize}
\item[a)] $x_{n_1} = x_{n_2}$;
\item[b)] $[T_{n_1}] \cap [T_{n_2}] = \emptyset$.
\end{itemize}
\end{itemize}
Then there is a Borel function
 $g: \baire \to \baire$
 that has a Borel code in $M$ satisfying
 $$(\forall x \in \baire)\, f_\mathcal{X}(x) \le^* g(x).$$
\end{proposition}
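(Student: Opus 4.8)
The plan is to build $g$ directly out of $\mc{T}$ by setting
$$g(x)(n) := \max\{ \mbox{Exit}(T_n)(x),\, n \}.$$
Since $\mc{T} \in M$ and a Borel code for $g$ is produced from $\mc{T}$ by an absolute construction, $g$ is a Borel function with a code in $M$; and $g(x) \in \baire$ because, each $T_n$ being a nonempty tree, $\mbox{Exit}(T_n)(x) \in \omega$ is always well defined. It then remains only to check that $f_{\mc{X}}(x) \le^* g(x)$ for every $x \in \baire$, i.e.\ that $f_{\mc{X}}(x)(n) \le g(x)(n)$ for all but finitely many $n$.

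Fix $x \in \baire$ and put $S_x := \{ n \in \omega : x \in [T_n] \}$. The crux of the argument, and the only place hypothesis 2) enters, is the observation that the reals $x_n$ with $n \in S_x$ all coincide: if $n, m \in S_x$ then $x \in [T_n] \cap [T_m]$, so $[T_n] \cap [T_m] \neq \emptyset$, whence $x_n = x_m$ by hypothesis 2). If $S_x \neq \emptyset$, let $x_*$ be this common value and put $L := \min\{ l : x(l) \neq x_*(l) \}$ in case $x \neq x_*$; in every remaining case (namely $x = x_*$, or $S_x = \emptyset$) put $L := 0$.

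I claim $f_{\mc{X}}(x)(n) \le g(x)(n)$ for every $n \ge L$, which yields $f_{\mc{X}}(x) \le^* g(x)$. The verification splits in two cases. If $n \notin S_x$, then $x \notin [T_n]$; writing $e := \mbox{Exit}(T_n)(x)$, we have $x \restriction e \notin T_n$ while $x_n \restriction e \in T_n$ by hypothesis 1), so the two sequences $x \restriction e$ and $x_n \restriction e$, being of common length $e$, are distinct --- hence $e \ge 1$ and $x$ and $x_n$ already differ at some level below $e$ --- giving $f_{\mc{X}}(x)(n) < e \le g(x)(n)$. If instead $n \in S_x$, then $x_n = x_*$, so by the definition of $f_{\mc{X}}$ we get $f_{\mc{X}}(x)(n) = 0$ when $x = x_*$ and $f_{\mc{X}}(x)(n) = L$ when $x \neq x_*$; in either case $f_{\mc{X}}(x)(n) \le L \le n \le g(x)(n)$ since $n \ge L$. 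As $x$ was arbitrary, this completes the proof.

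The heart of the matter is the opening observation: $x$ can belong to infinitely many of the $[T_n]$ only if the associated $x_n$ are all equal, so that on those coordinates $f_{\mc{X}}(x)(n)$ collapses to the single value $L$ --- which depends on $x$ but not on $n$ --- and the summand $n$ in the definition of $g$ is present precisely to dominate this $L$ for large $n$. I do not anticipate a real obstacle: the remaining points (the Borel code of $g$ lying in $M$, and the elementary behavior of $\mbox{Exit}$ on trees) are routine, and in fact the argument shows $f_{\mc{X}}(x)$ and $g(x)$ can disagree only among the coordinates below $L$.
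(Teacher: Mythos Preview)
Your proof is correct and takes essentially the same approach as the paper: you define the same function $g(x)(n) = \max\{\mbox{Exit}(T_n)(x), n\}$, use the same observation that $x \in [T_n]$ for several $n$ forces the corresponding $x_n$ to coincide, and handle the two cases $x \in [T_n]$ and $x \notin [T_n]$ in the same way. The only cosmetic difference is that you argue directly (exhibiting the explicit threshold $L$ beyond which $f_{\mc{X}}(x)(n) \le g(x)(n)$), whereas the paper phrases the same computation as a proof by contradiction.
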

\begin{proof}
Let $g: \baire \to \baire$ be defined by
 $$g(x)(n) := \max \{
  \mbox{Exit}(T_n)(x), n \}.$$
Certainly
 $g$ is Borel,
 with a code in $M$
 (because $\mc{T} \in M$).
The ``$\mbox{Exit}(T_n)(x)$'' part of the definition
 is doing most of the work.
Specifically, for any $n \in \omega$ and $x \not\in [T_n]$,
 $$f_\mc{X}(x)(n) =
 \mbox{Exit}([[x_n]])(x) \le
 \mbox{Exit}(T_n)(x).$$
This is because since $x_n$ is a path through
 the tree $T_n$,
 $x \not\in [T_n]$ implies
 the level where $x$ exits $T_n$
 is not before the level where $x$ differs from $x_n$.
Thus, we have
 $$(\forall n \in \omega)\,
 x \not\in [T_n] \Rightarrow
 f_\mc{X}(x)(n) \le
 g(x)(n).$$

Suppose, towards a contradiction, that there is some
 $x \in \baire$ satisfying $f_\mc{X}(x) \not\le^* g(x).$
Fix such an $x$.
Let $A$ be the infinite set
 $$A := \{ n \in \omega: f_\mc{X}(x)(n) > g(x)(n) \}.$$
It must be that
 $x \in [T_n]$ for each $n \in A$.
By hypothesis, this implies
 $x_{n_1} = x_{n_2}$ for all $n_1,n_2 \in A$.
Thus,
 $f_\mc{X}(x)(n)$
 is the same constant for all $n \in A$.
This is a contradiction, because
 $g(x)(n) \ge n$ for all $n$.
\end{proof}

Here is the stronger result where
 we only assume that $M$ has a shrink
 wrapper for $\mc{X}$:

\begin{theorem}
\label{combosepdevice}
Let $M$ be a transitive model of $\zf$.
Let $$\mc{X} =
 \langle x_n : n \in \omega \rangle$$ be a sequence of reals.
Suppose $\mc{W} = \langle \mc{F}, I \rangle \in M$
 is a shrink wrapper for $\mc{X}$.
Then there is a Borel function $g : \baire \to \baire$
 that has a Borel code in $M$ satisfying
 $$(\forall x \in \baire)\,
 f_\mc{X}(x) \le^* g(x).$$
\end{theorem}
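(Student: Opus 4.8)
The plan is to mimic the construction of Proposition~\ref{treeavoid}, building $g$ from the $\mbox{Exit}$ functions of the trees of the shrink wrapper together with certain finite ``separation levels'' extracted from clause~3 (and clause~4) of Definition~\ref{srdef}. Since the trees $F_{\tilde n,n}(s)$, the sets $I(n)$, and the auxiliary functions $\eta,\Phi,(i,l)\mapsto f_i(l)$ all lie in $M$, any $g$ assembled from them by countably many Borel operations automatically has a Borel code in $M$; the whole point is to arrange $f_\mc{X}(x)\le^* g(x)$.

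Fix $x$ and a coordinate $n$. Say that $x$ \emph{escapes} a pair $\tilde n\ni n$ if $x\notin[F_{\tilde n,n}(s)]$ for every $s\in{}^{\tilde n}2$. If $x$ escapes some pair, let $\tilde n_{\mathrm{esc}}(x,n)$ be the least one; by clause~2 some $[F_{\tilde n_{\mathrm{esc}}(x,n),n}(s^\ast)]$ contains $x_n$, and since $x$ lies outside it, the argument of Proposition~\ref{treeavoid} gives $f_\mc{X}(x)(n)\le\mbox{Exit}(F_{\tilde n_{\mathrm{esc}}(x,n),n}(s^\ast))(x)\le\max_{s}\mbox{Exit}(F_{\tilde n_{\mathrm{esc}}(x,n),n}(s))(x)=:E(x,n)$, a quantity computable from $x$ and $\mc{W}$. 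If $x$ escapes no pair $\ni n$ but $x\neq x_n$, then, letting $\tilde n_0=\tilde n_0(n)$ be the least pair containing $n$, the point $x$ lies in some container $[F_{\tilde n_0,n}(s_1)]$ while $x_n$ lies in some $[F_{\tilde n_0,n}(s^\ast)]$ by clause~2; these two containers cannot both equal a common isolated singleton (that would force $x=x_n$), so by clause~4b they are disjoint, and the ``therefore'' part of 4b provides a level, computable in $M$, below which any path of the first differs from any path of the second --- hence $f_\mc{X}(x)(n)<L_n(\tilde n_0)$, where $L_n(\tilde n_0)\in M$ is the maximum of all such separation levels over disjoint pairs of containers of $F_{\tilde n_0,n}$. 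Finally, if $x=x_n$ then $f_\mc{X}(x)(n)=0$.

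Accordingly, set $g(x)(n):=\max\bigl(n,\ L_n(\tilde n_0(n)),\ E(x,n)\bigr)$, with $E(x,n):=0$ when $x$ escapes no pair at $n$. The three cases above show $f_\mc{X}(x)(n)\le g(x)(n)$ for \emph{every} $x$ and $n$ (so in fact $f_\mc{X}(x)\le g(x)$, not merely $\le^*$), and $g$ is Borel with a code in $M$: ``$x$ escapes pair $\tilde n$ at $n$'' is a finite intersection of open conditions, $\tilde n_{\mathrm{esc}}(x,n)$ is found by an unbounded search, and $\mbox{Exit}$ of a fixed tree is continuous where it is finite.

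The step I expect to be the main obstacle is the middle case --- non-escaping $n$ with $x\neq x_n$ --- when one is given only clauses~1--3 and not clause~4 (the definition of shrink wrapper requires only 1--3, although, as remarked after Definition~\ref{srdef}, clause~4 can usually be arranged). Without clause~4 the containers of a single $F_{\tilde n,n}$ need not be pairwise disjoint or equal, so the bound on $f_\mc{X}(x)(n)$ must be obtained more delicately, routing through the partner index $m$ of a pair and using clause~3a/3b to pin down the relationship among $x$, $x_n$ and $x_m$; carrying this out, and checking that the indices on which it fails are only finitely many for each $x$, is the technical heart of the proof. I would also note in passing why a more naive attempt --- letting $g(x)(n)$ consult only boundedly many pairs per $n$ and then invoking clause~3 on a pair sitting inside the bad set $\{n:f_\mc{X}(x)(n)>g(x)(n)\}$ --- cannot work: the resulting ``visibility graph'' on $\omega$ would be locally finite, hence would contain an infinite independent set on which the bad set could be supported. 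The escape/non-escape dichotomy above is precisely what sidesteps this.
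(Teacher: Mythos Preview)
Your proposal correctly isolates the escape/non-escape dichotomy (the paper's tree $T_n := \bigcap_{\tilde n \ni n} \bigcup_s F_{\tilde n,n}(s)$ satisfies $x \in [T_n]$ exactly when $x$ never escapes at $n$, and $\mbox{Exit}(T_n)(x)$ plays the role of your $E(x,n)$). But your handling of the non-escape case has two problems. First, it rests on clause~4), which is \emph{not} part of Definition~\ref{srdef}; you flag this yourself, but the $g$ you actually write down uses only the clause-4 separation levels $L_n(\tilde n_0)$ and none of the clause-3 information. Second, even granting clause~4), your argument is incomplete: you pick $s_1$ with $x \in [F_{\tilde n_0,n}(s_1)]$ and $s^\ast$ with $x_n \in [F_{\tilde n_0,n}(s^\ast)]$ and then invoke~4b), but nothing prevents $s_1 = s^\ast$. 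When $x$ and $x_n$ lie in the \emph{same} container (which clause~4 does not forbid), no quantity computable from $\mc W$ and $x$ at coordinate $n$ alone bounds the level where they diverge. So the claimed pointwise inequality $f_{\mc X}(x) \le g(x)$ is too strong; only $\le^*$ is obtainable, and only by bringing in other coordinates.

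The paper does exactly that, via the approach you dismiss in your last paragraph. It sets $e(n_2)$ to be the maximum clause-3c) separation level over all $s_1,s_2$ and all pairs $\{n_1,n_2\}=\eta(\tilde n)$ with $n_1 < n_2$, and takes $g(x)(n) := \max\{\mbox{Exit}(T_n)(x),\, e(n),\, n\}$. If $A:=\{n: f_{\mc X}(x)(n) > g(x)(n)\}$ were infinite, every $n\in A$ would be non-escaping; choose $n_1<n_2$ in $A$ with $f_{\mc X}(x)(n_1)\le n_2$, let $\eta(\tilde n)=\{n_1,n_2\}$, and apply clause~3) to $C_1:=[F_{\tilde n,n_1}(s_1)]\ni x$ and $C_2:=[F_{\tilde n,n_2}(s_2)]\ni x_{n_2}$. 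Case~3c) is excluded since $f_{\mc X}(x)(n_2)>e(n_2)$; case~3b) forces $C_2=\{x\}$, hence $x_{n_2}=x$ and $f_{\mc X}(x)(n_2)=0$; case~3a) forces $x_{n_1}=x_{n_2}$, hence $f_{\mc X}(x)(n_2)=f_{\mc X}(x)(n_1)\le n_2\le g(x)(n_2)$ --- each a contradiction. Your ``locally finite visibility graph'' objection does not apply here: $e(n_2)$ consults the $n_2$ many pairs $\{n_1,n_2\}$ with $n_1<n_2$, so the visibility relation is the order $<$ on $\omega$, which has no independent pair at all.
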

\begin{proof}
For each $n \in \omega$,
 let $T_n \subseteq \bairenodes$ be the tree
 $$T_n := \bigcap
 \{ \bigcup \im(F_{\tilde{n},n}) :
 \tilde{n} \in \omega \wedge
 n \in \eta(\tilde{n}) \}.$$
That is, for each $t \in \bairenodes$,
 $t \in T_n$ iff 
 $$(\forall \tilde{n} \in \omega)[
 n \in \eta(\tilde{n}) \Rightarrow
 t \in \bigcup_{s \in {^{\tilde{n}}2}} F_{\tilde{n},n}(s)].$$
By part 2) of the definition of
 a shrink wrapper,
 $$(\forall n \in \omega)\,
 x_n \in [T_n].$$
Let $e(n_2)$ be the least
 level $l$ such that if
 $n_1 < n_2$,
 $\tilde{n}$ satisfies $\eta(\tilde{n}) = \{n_1, n_2\}$,
 and $s_1,s_2 \in {^{\tilde{n}}2}$ satisfy
 $[F_{\tilde{n},n_1}(s_1)] \cap
  [F_{\tilde{n},n_2}(s_2)] = \emptyset$,
 then all elements of $[F_{\tilde{n},n_1}(s_1)]$
 differ from all elements of
 $[F_{\tilde{n},n_2}(s_2)]$ before level $l$.

Let $g : \baire \to \baire$ be defined by
 $$g(x)(n) := \max \{
 \mbox{Exit}(T_n)(x),
% d(n),
 e(n),
 n \}.$$
Certainly $g$ is Borel,
 with a code in $M$ (because $\mc{W} \in M$).
Just like in the previous proposition,
 since $x_n \in [T_n]$,
 for all $x \in \baire$ and $n \in \omega$ we have
 $$x \not\in [T_n] \Rightarrow
 f_\mc{X}(x)(n) \le g(x)(n).$$
Suppose, towards a contradiction, that there is
 some $x \in \baire$ satisfying
 $f_\mc{X}(x) \not\le^* g(x)$.
Fix such an $x$.
Let $A$ be the infinite set
 $$A := \{ n \in \omega :
 f_\mc{X}(x)(n) > g(x)(n) \}.$$
It must be that $x \in [T_n]$ for each $n \in A$.
Since $A$ is infinite, we may fix
 $n_1, n_2 \in A$ satisfying the following:
\begin{itemize}
\item[i)] $n_1 < n_2$;
\item[ii)] $f_\mc{X}(x)(n_1) \le n_2$.
\end{itemize}
Let $\tilde{n}$ satisfy
 $\eta(\tilde{n}) = \{n_1, n_2\}$.
%Since $x_{n_1} \in [T_{n_2}]$, fix some
% $s_1 \in {^{\tilde{n}}2}$ satisfying
% $$x_{n_1} \in [F_{\tilde{n},n_1}(s_1)].$$
Since $x \in [T_{n_1}]$, fix some
 $s_1 \in {^{\tilde{n}}2}$ satisfying
 $$x \in [F_{\tilde{n},n_1}(s_1)] =: C_1.$$
Also, since $x_{n_2} \in [T_{n_2}]$, fix some
 $s_2 \in {^{\tilde{n}}2}$ satisfying
 $$x_{n_2} \in [F_{\tilde{n},n_2}(s_2)] =: C_2.$$
%Let $C_1 := [F_{\tilde{n},n_1}(s_1')]$
% and $C_2 := [F_{\tilde{n},n_2}(s_2)]$.

Because $f_\mc{X}(x)(n_2) > g(x)(n_2)$,
 we have $f_\mc{X}(x)(n_2) > e(n_2)$,
 so $$\mbox{Exit}([[x_{n_2}]])(x) > e(n_2).$$
This, combining with the definition of $e(n_2)$
 and the fact that $x \in C_1$ and
 $x_{n_2} \in C_2$ tells us that
 $C_1 \cap C_2 \not= \emptyset$
 (because otherwise $x \in C_1$ and
 $x_{n_2} \in C_2$ would differ before level
 $e(n_2)$, which by definition of
 $e(n_2)$ would mean that
 $\mbox{Exit}([[x_{n_2}]])(x) \le e(n_2)$).
% $f_\mc{X}(x)(n_2) > e(n_2)$,
% it cannot be that
% $C_1 \cap C_2 = \emptyset$.
Thus, by part 3) of the definition
 of a separation device,
 one of the following holds:
\begin{itemize}
\item[a)]
 $x_{n_1} = x_{n_2}$;
\item[b)]
 $C_1 = C_2 = \{x\}$.
\end{itemize}
Now, b) cannot be the case because
 $C_2 = \{x\}$ implies $x_{n_2} = x$,
 which implies $f_\mc{X}(x)(n_2) = 0$,
 which contradicts the fact that
 $f_\mc{X}(x)(n_2) > g(x)(n_2)$.
On the other hand,
 a) cannot be the case because
 $x_{n_1} = x_{n_2}$ implies
 $f_\mc{X}(x)(n_1) =
  f_\mc{X}(x)(n_2)$,
 which by ii) implies
 $$f_\mc{X}(x)(n_2) =
 f_\mc{X}(x)(n_1) \le
 n_2 \le
 g(x)(n_2) <
 f_\mc{X}(x)(n_2),$$
 which is impossible.
\end{proof}

\section{Sacks Forcing}

In this section, we will show that the
 shrink wrapping property holds between
 the ground model and any Sacks forcing extension.
 To learn more about Sacks forcing, the reader may
 consult \cite{Baumgartner}, \cite{Miller}, \cite{Hrusak} 
 and \cite{Geshke}.
%In this section we will show that
% Sacks forcing has the shrink wrapping property.
%In this section we will show that
% if $V$ is a Sacks forcing extension of
% a transitive model $M$ of $\zf$
% and $\mc{X} =
% \langle x_n \in \baire : n \in \omega \rangle$
% is an arbitrary sequence,
% then there is a shrink wrapper for $\mc{X}$
% in $M$.

\begin{definition}
A tree $p \subseteq {^{<\omega}2}$ is \defemp{perfect}
 iff it is nonempty and for each $t \in p$,
 there are incompatible $t_1, t_2 \in p$ extending $t$.
Sacks forcing $\mbb{S}$ is the poset of all
 perfect trees $p \subseteq \cantornodes$,
 where $p_1 \le p_2$ iff $p_1 \subseteq p_2$.
\end{definition}

Given $p_1, p_2 \in \mbb{S}$,
 $p_1 \perp p_2$ means that $p_1$ and $p_2$
 are incompatible.

\begin{definition}
Let $p \subseteq \bairenodes$ be a tree.
A node $t \in p$ is called a \defemp{branching node}
 iff $t$ has at least two (immediate) successors in $p$.
% $t ^\frown 0, t^\frown 1 \in p$.
If there is a branching node,
 then $\mbox{Stem}(p)$ is defined as the minimal such node.
If $p$ is a leafless tree but has no branching node
 (so $p$ is \textit{not} perfect),
 then $\mbox{Stem}(p)$ is defined to be the unique
  path through $p$.
This will be convenient later.
%That is, $\mbox{Step}(p)$ is comparable to all elements of $p$.
% branching node $t$ of $p$ such that
% all elements of $p$ are comparable to $t$.
A node $t \in p$ is said to be an
 \defemp{$n$-th branching node} iff it is a branching
 node and there are exactly $n$ branching nodes
 that are proper initial segments of it.
In particular, $\mbox{Stem}(p)$ is the unique
 $0$-th branching node of $p$.
Given Sacks conditions $p, q$, we write
 $q \le_n p$ iff $q \le p$ and all
 of the $k$-th branching nodes, for $k \le n$,
 of $p$ are in $q$ and are branching nodes.
\end{definition}

\begin{lemma}[Fusion Lemma]
Let $\langle p_n : n \in \omega \rangle$
 be a sequence of Sacks conditions such that
 $$p_0 \ge_0 p_1 \ge_1 p_2 \ge_2 ....$$
Then
 $p_\omega := \bigcap_{ n \in \omega } p_n$
 is a Sacks condition below each $p_n$.
\end{lemma}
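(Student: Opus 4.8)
The plan is to reduce everything to showing $p_\omega$ is perfect. Since each $p_n$ is a downward-closed subtree of $\cantornodes$ and the sequence is $\subseteq$-decreasing, $p_\omega = \bigcap_{n} p_n$ is again a downward-closed subtree with $p_\omega \subseteq p_n$ for every $n$; hence as soon as $p_\omega$ is known to be perfect it is a Sacks condition, and $p_\omega \subseteq p_n$ gives $p_\omega \le p_n$. The governing idea for perfectness is that the branching structure of the $p_n$ stabilizes level by level: the $j$-th branching nodes become ``frozen'' from $p_j$ on.

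To make this precise I would first prove a stabilization fact: if $q \le_n p$ are Sacks conditions, then $q$ and $p$ have exactly the same $j$-th branching nodes, with the same initial segments, for each $j \le n$. The proof is by induction on $n$. For $n = 0$, the relation $q \le_0 p$ puts $\mbox{Stem}(p)$ into $q$ as a branching node, while every proper initial segment of $\mbox{Stem}(p)$ fails to branch in $p$, hence (as $q \subseteq p$) in $q$; so $\mbox{Stem}(q) = \mbox{Stem}(p)$. For the inductive step, once the $(n-1)$-st branching nodes of $p$ and $q$ agree, fix such a node $t$ and an immediate successor $t^\frown i \in q \subseteq p$; in $p$ there is a unique branching-free path from $t^\frown i$ up to the next branching node $u_i$, which is the $n$-th branching node of $p$ above $t^\frown i$. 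Because $q \le_n p$ forces $u_i$ into $q$ as a branching node, and because no node strictly between $t^\frown i$ and $u_i$ branches in $q$ (it does not branch in $p \supseteq q$), the node $u_i$ is also the $n$-th branching node of $q$ above $t^\frown i$, and the path up to it is the same. Applying this along the fusion sequence (using $p_{m+1} \le_m p_m$) shows that for each $j$ the trees $p_j, p_{j+1}, \dots$ all have the same set $B_j$ of $j$-th branching nodes; since $B_j \subseteq p_j$ and the $p_m$ are nested, $B_j \subseteq p_\omega$ for every $j$.

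I would then finish by identifying $p_\omega$ with the increasing union $\bigcup_j D_j$, where $D_j$ is the downward closure of $B_j$, and checking that this tree is perfect. The inclusion $\bigcup_j D_j \subseteq p_\omega$ is immediate from $B_j \subseteq p_\omega$. For the reverse inclusion a counting argument works: given $t \in p_\omega$, pick $m$ large; the $\subseteq$-minimal branching node $u$ of $p_m$ extending $t$ is the $k$-th branching node of $p_m$ for some $k$, and all $k$ of the branching nodes of $p_m$ strictly below $u$ are (being comparable to $t$ and not strictly above it) initial segments of $t$, so $k$ is bounded by the length of $t$; taking $m$ larger than that bound gives $u \in B_k$ and $t \in D_k$. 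Finally $p_\omega$ is nonempty because $\mbox{Stem}(p_0) \in B_0$, and it is perfect because every $t \in p_\omega$ lies in some $D_j$, hence below some $u \in B_j$, and such a $u$ is a branching node of $p_\omega$: being the $j$-th branching node of $p_{j+1}$, its two immediate successors lie in $p_{j+1}$ and extend there to $(j{+}1)$-st branching nodes, which belong to $B_{j+1} \subseteq p_\omega$, so both successors of $u$ are in $p_\omega$ by downward closure.

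The step I expect to be the main obstacle is the stabilization fact, and within it the bookkeeping in the inductive step: one must use $q \subseteq p$ carefully to rule out ``new'' branching nodes of $q$ appearing strictly between two consecutive branching nodes of $p$, and keep the indexing of branching nodes straight (a node that is the $n$-th branching node of $p$ stays the $n$-th branching node of $q$ exactly because the part of the tree below it is unchanged). The counting argument is the other place needing a little care, though it is routine once one observes that below any fixed node the branching nodes of a tree are linearly ordered.
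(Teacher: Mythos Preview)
Your proof is correct and complete. The paper, however, does not actually prove this lemma: it simply records that the Fusion Lemma is standard and refers the reader to \cite{Jech}. So there is no substantive comparison to make; you have supplied a full argument where the paper only gives a citation. One minor point: in your counting step the bound on $k$ is $|t|+1$ rather than $|t|$ (since $t$ itself could be a branching node strictly below $u$), but this does not affect the argument, as you only need $k$ to be bounded independently of $m$.
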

\begin{proof}
This is standard and can be found
 in introductory presentations
 of Sacks forcing.
See, for example,
 \cite{Jech}.
\end{proof}

The sequence $\langle p_n : n \in \omega \rangle$
 in the lemma above is known as a
 \textit{fusion sequence}.
The following will help in the construction
 of fusion sequences.

\begin{lemma}[Fusion Helper Lemma]
\label{fusionhelper}
%Let $\mbb{S}$ be Sacks forcing.
Let $R: \cantornodes \to \mbb{S}$ be a function
 with the following properties:
\begin{itemize}
\item[1)] $(\forall s_1, s_2 \in \cantornodes)$
 $s_2 \sqsupseteq s_1$ implies
 $R(s_2) \le R(s_1)$;
\item[2)] $(\forall s \in \cantornodes)$
 $\mbox{Stem}(R(s ^\frown 0)) \perp
 \mbox{Stem}(R(s ^\frown 1))$.
\end{itemize}
For each $n \in \omega$, let $p_n$ be the Sacks condition
 $$p_n := \bigcup \{ R(s) : s \in {{^n}2} \}.$$
Then $$R(\emptyset) = p_0 \ge p_1 \ge_0
 p_2 \ge_1 p_3 \ge_2 ...$$
 is a fusion sequence.
\end{lemma}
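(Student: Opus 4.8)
The plan is to verify directly that the sequence $\langle p_n : n \in \omega \rangle$ defined by $p_n = \bigcup\{R(s) : s \in {^n}2\}$ is a fusion sequence, i.e.\ that each $p_n$ is a Sacks condition and that $p_{n+1} \le_{n-1} p_n$ (with the weaker $p_1 \le p_0$ at the bottom, matching the indexing in the displayed chain). First I would observe that each $p_n$ is a tree: it is a union of trees $R(s)$, and by property 1) the $R(s)$ for $s \in {^n}2$ are increasing along extensions, so their union is downward closed. To see $p_n$ is perfect, I would note that for $s \in {^n}2$, $R(s^\frown 0)$ and $R(s^\frown 1)$ both sit below $R(s)$ and have incompatible stems, so the subtree of $p_{n+1}$ above $\mathrm{Stem}(R(s))$ already contains two incompatible extensions; more carefully, above any node $t \in p_n$, pick $s \in {^n}2$ with $t \in R(s)$, then $t$ extends into $R(s^\frown 0)$ and $R(s^\frown 1)$ (both perfect, hence leafless, and both containing $t$ since their stems extend $\mathrm{Stem}(R(s))$... here I need to be slightly careful and instead argue that $t$ has some extension in $R(s)$ that is comparable to both stems, or directly that $p_{n+1} \subseteq p_n$ gives leaflessness of $p_n$ from that of $p_{n+1}$ together with perfectness coming from the branching induced by the incompatible stems).

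Next I would establish $p_{n+1} \subseteq p_n$: if $t \in R(s')$ for some $s' \in {^{n+1}}2$, then writing $s = s' \restriction n$ we have $R(s') \le R(s)$ by property 1), so $t \in R(s) \subseteq p_n$. The heart of the matter is identifying the branching nodes. I would argue that the $k$-th branching nodes of $p_n$, for $k < n$, are exactly the stems $\mathrm{Stem}(R(s))$ for $s \in {^{k+1}}2$ — more precisely, that the first $n$ levels of branching in $p_n$ are governed by the tree structure of $\{R(s) : s \in {^{\le n}}2\}$, with the splitting at $\mathrm{Stem}(R(s^\frown 0)) \perp \mathrm{Stem}(R(s^\frown 1))$ realized inside $p_n$. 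Then, since $p_{n+1}$ contains each $R(s')$ for $s' \in {^{n+1}}2 \supseteq {^{\le n}}2$-extensions, all these branching nodes survive into $p_{n+1}$ and remain branching there, giving $p_{n+1} \le_{n-1} p_n$.

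I expect the main obstacle to be the bookkeeping around which nodes are the branching nodes and the fact that property 2) only constrains the \emph{stems} of $R(s^\frown 0)$ and $R(s^\frown 1)$, not where the next branching occurs inside them; so one must be careful that a ``$k$-th branching node of $p_n$'' in the sense of the $\le_n$ definition is correctly matched up with the stems at level $k+1$ of the labelling tree. The clean way to handle this is probably to prove by induction on $n$ the statement: for each $s \in {^n}2$, $\mathrm{Stem}(R(s))$ is a branching node of $p_{n+1}$, it has exactly $n$ branching nodes of $p_{n+1}$ as proper initial segments (namely $\mathrm{Stem}(R(s\restriction 1)), \ldots, \mathrm{Stem}(R(s\restriction n)) = \mathrm{Stem}(R(s))$ — wait, that is $n$ of them counting from level $1$), and below level $|\mathrm{Stem}(R(s))|$ plus a bit, $p_{n+1}$ agrees with the finite union $\bigcup\{R(s') : s' \in {^{n+1}}2\}$ in the relevant way. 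Everything else — downward closure, leaflessness transferring from $p_{n+1}$ to $p_n$, and the Fusion Lemma then applying to produce $p_\omega$ — is routine and I would not belabor it.
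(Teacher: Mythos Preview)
Your overall plan matches the paper's: show $p_{n+1}\subseteq p_n$ and then verify that the low branching nodes of $p_n$ persist as branching nodes of $p_{n+1}$. However, your identification of those branching nodes is incorrect, and this is a genuine gap rather than mere bookkeeping.

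You claim that the $k$-th branching nodes of $p_n$ are the stems $\mathrm{Stem}(R(s))$ for $s\in{^{k+1}}2$, and your proposed induction hinges on showing that each $\mathrm{Stem}(R(s))$ is a branching node of $p_{n+1}$. This fails already for $s=\emptyset$: take $R(\emptyset)={^{<\omega}2}$ with $\mathrm{Stem}(R(0))=\langle 0,0\rangle$ and $\mathrm{Stem}(R(1))=\langle 0,1\rangle$. Then $\mathrm{Stem}(R(\emptyset))=\emptyset$ has only the single child $\langle 0\rangle$ in $p_1=R(0)\cup R(1)$, so it is not a branching node of $p_1$ at all. More generally, property 2) only guarantees that the two child stems are incompatible, not that they split off at the parent stem.

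The fix, which is exactly what the paper does, is to observe that a $k$-th branching node $t$ of $p_n$ (for $k\le n-1$) is the \emph{largest common initial segment} of $\mathrm{Stem}(R(s^\frown 0))$ and $\mathrm{Stem}(R(s^\frown 1))$ for some $s\in{^k}2$. Since both of those stems lie in $R(s^\frown 0)\cup R(s^\frown 1)\subseteq p_{n+1}$, their meet $t$ is a branching node of $p_{n+1}$ as well. With this correction in place, the rest of your outline goes through and is essentially the paper's argument.
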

\begin{proof}
Consider any $n \ge 1$.
Certainly $p_n \supseteq p_{n+1}$,
 because for each $s \in {{^n}2}$,
 $R(s) \supseteq
 R(s ^\frown 0) \cup
 R(s ^\frown 1)$.
To show that $p_n \ge_{n-1} p_{n+1}$,
 consider a $k$-th branching
 node $t$ of $p_n$
 for some $k \le n-1$.
One can check that there is some
 $s \in {{^k}2}$ such that
 $t$ is the largest common initial segment of
 $\mbox{Stem}(R(s ^\frown 0))$ and
 $\mbox{Stem}(R(s ^\frown 1))$.
Since $$
 \mbox{Stem}(R(s ^\frown 0)) \cup
 \mbox{Stem}(R(s ^\frown 1))
 \subseteq R(s ^\frown 0) \cup R(s ^\frown 1)
 \subseteq p_{n+1},$$
 we have that $t$ is a branching node of $p_{n+1}$.
Thus, we have shown that for each $k \le n-1$,
 each $k$-th branching node of $p_n$ is
 a branching node of $p_{n+1}$.
Hence, $p_n \ge_{n-1} p_{n+1}$.
\end{proof}

We present
 a forcing lemma
 that is a basic building
 block for separating $x_{n_1}$
 from $x_{n_2}$.
Combining this with
 a fusion argument
 gives us the result.

\begin{lemma}
\label{differentdrum}
Let $\mbb{P}$ be any forcing.
Let $p_0, p_1 \in \mbb{P}$ be conditions.
Let $\dot{\tau}_0, \dot{\tau}_1$ be names
 for elements of $\baire$.
% both $p_0$ and $p_1$ force
% them to be elements of $\baire$.
Suppose that there is no
 $x \in \baire$ satisfying
 the following two statements:
\begin{itemize}
\item[1)] $p_0 \forces \dot{\tau}_0 = \check{x}$;
\item[2)] $p_1 \forces \dot{\tau}_1 = \check{x}$.
\end{itemize}
Then there exist
 $p_0' \le p_0$;
 $p_1' \le p_1$;
 and $t_0, t_1 \in \bairenodes$
 satisfying the following:
\begin{itemize}
\item[3)] $t_0 \perp t_1$,
\item[4)] $p_0' \forces \dot{\tau}_0 \sqsupseteq \check{t}_0$,
\item[5)] $p_1' \forces \dot{\tau}_1 \sqsupseteq \check{t}_1$.
\end{itemize}
\end{lemma}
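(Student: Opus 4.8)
The plan is to isolate a standard dichotomy for names of reals and then run a short case analysis.

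\emph{The dichotomy.} For any forcing $\mathbb{Q}$, any $q \in \mathbb{Q}$, and any $\mathbb{Q}$-name $\dot\sigma$ for an element of $\baire$, I claim that either (a) there is a single $x \in \baire$ with $q \forces \dot\sigma = \check{x}$, or (b) there are $q', q'' \le q$ and incompatible nodes $s' \perp s''$ in $\bairenodes$ with $q' \forces \check{s'} \sqsubseteq \dot\sigma$ and $q'' \forces \check{s''} \sqsubseteq \dot\sigma$. To see this, suppose (b) fails and let $A$ be the set of all $s \in \bairenodes$ such that some condition below $q$ forces $\check{s} \sqsubseteq \dot\sigma$. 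Failure of (b) says no two elements of $A$ are incompatible, i.e.\ $A$ is a $\sqsubseteq$-chain; and since for every $k$ the condition $q$ forces $\dot\sigma \restriction k$ to be some node of length $k$, density shows $A$ contains a node of each length $k$. Hence $A = \{ t_k : k \in \omega \}$ with $|t_k| = k$ and $t_0 \sqsubseteq t_1 \sqsubseteq \cdots$; put $x := \bigcup_k t_k \in \baire$. If $q$ did not force $\dot\sigma = \check{x}$, some $r \le q$ would force $\dot\sigma(k) \ne x(k)$ for a particular $k$ and, after refining $r$, would decide $\dot\sigma \restriction (k+1)$ to be a node $s$ of length $k+1$ with $s(k) \ne x(k)$; then $s \in A$ and $s$ is incompatible with $t_{k+1}$, contradicting that $A$ is a chain. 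So (a) holds.

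\emph{Case analysis for the lemma.} If $p_0$ does not satisfy alternative (a) for $\dot\tau_0$, apply the dichotomy to get $q_0, q_0' \le p_0$ and $t \perp t'$ with $q_0 \forces \check{t} \sqsubseteq \dot\tau_0$ and $q_0' \forces \check{t'} \sqsubseteq \dot\tau_0$. Set $n := \max(|t|,|t'|)$ and extend $p_1$ to a condition $q_1$ deciding $\dot\tau_1 \restriction n$, say $q_1 \forces \dot\tau_1 \restriction n = \check{u}$ with $|u| = n$. Since $|t|,|t'| \le |u|$, compatibility of $t$ (respectively $t'$) with $u$ would force $t \sqsubseteq u$ (respectively $t' \sqsubseteq u$); as $t \perp t'$, at most one of these can happen, so one of $t, t'$ — call it $t_0$ — is incompatible with $u$. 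Taking $p_0'$ to be whichever of $q_0, q_0'$ forces $\check{t_0} \sqsubseteq \dot\tau_0$, taking $p_1' := q_1$, and $t_1 := u$, conditions 3)--5) are met. The case where $p_1$ does not satisfy (a) for $\dot\tau_1$ is symmetric. Finally, if $p_0$ forces $\dot\tau_0 = \check{x_0}$ and $p_1$ forces $\dot\tau_1 = \check{x_1}$ for reals $x_0, x_1$, then the hypothesis of the lemma forces $x_0 \ne x_1$; letting $l$ be least with $x_0(l) \ne x_1(l)$ and setting $t_0 := x_0 \restriction (l+1)$, $t_1 := x_1 \restriction (l+1)$, $p_0' := p_0$, $p_1' := p_1$ completes the proof.

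\emph{Where the work is.} The dichotomy is the crux; after it the argument is just bookkeeping with initial segments. Within the case analysis, the one step that genuinely uses the shape of the hypothesis is the gluing: a bare branching value forced for $\dot\tau_0$ need not be incompatible with the decided initial segment $u$ of $\dot\tau_1$, which is exactly why we extract two incompatible candidates $t \perp t'$ for $\dot\tau_0$ and observe that a single node $u$ of length at least $|t|, |t'|$ can absorb at most one of them.
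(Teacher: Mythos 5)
Your proof is correct and follows essentially the same route as the paper's: both arguments rest on the dichotomy that a condition either decides the whole real or admits two extensions forcing incompatible initial segments, followed by a short case analysis. You spell out the dichotomy and organize the cases slightly differently (splitting on which of $p_0$, $p_1$ decides its name, and using a decided initial segment of $\dot\tau_1$ of length $\max(|t|,|t'|)$ where the paper notes $p_1$ cannot force extension of both incompatible nodes), but these are cosmetic variations on the same argument.
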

\begin{proof}
There are two cases to consider.
The first is that there exists some $x \in \baire$
 such that
 1) is true.
When this happens,
 2) is false.
% $p_0 \forces \dot{\tau}_0 = \check{x}$.
%When this happens,
% by hypothesis $p_1 \not\forces \dot{\tau}_1 = \check{x}$.
Hence, there exist $t_1 \in \bairenodes$ and
 $p_1' \le p_1$ such that
 5) is true and
 $x \not\sqsupseteq t_1$.
 %$x \perp t_1$.
% $p_1' \forces \check{t}_1 \sqsubseteq \dot{\tau}_1$
% and $x \perp t_1$.
Letting $p_0' := p_0$ and
 $t_0$ be some initial segment of $x$
 incompatible with $t_1$,
 we see that 3) and 4) are true.

The second case is that there is no
 $x \in \baire$ satisfying 1).
When this happens,
 there exist conditions $p_0^a,p_0^b \le p_0$
 and incompatible nodes
 $s_a, s_b \in \bairenodes$ satisfying both
 $p_0^a \forces \dot{\tau}_0 \sqsupseteq \check{s}_a$ and
 $p_0^b \forces \dot{\tau}_0 \sqsupseteq \check{s}_b.$
Now, it cannot be that both
 $p_1 \forces \dot{\tau}_1 \sqsupseteq \check{s}_a$ and
 $p_1 \forces \dot{\tau}_1 \sqsupseteq \check{s}_b$.
Assume, without loss of generality, that
 $p_1 \not\forces \dot{\tau}_1 \sqsupseteq \check{s}_a$.
This implies that there exist
 $p_1' \le p_1$ and $t_1 \in \bairenodes$ such that
 $s_a \perp t_1$ and
 $p_1' \forces \dot{\tau}_1 \sqsupseteq \check{t}_1$.
Letting $p_0' := p_0^a$
 and $t_0 := s_a$,
 we are done.
\end{proof}

At this point,
 the reader may want to think about how
 to use this lemma
 to prove that if $V$ is a Sacks forcing extension
 of a transitive model $M$ of $\zf$
 and $\mc{X} = \langle x_n : n \in \omega \rangle$ is a sequence of reals that  satisfies
 $$(\forall n \in \omega)\, x_n \not\in M$$ and
 $$\{ ( n_1, n_2 ) : x_{n_1} = x_{n_2} \} \in M,$$
 then there is a sequence $\mc{T}$ of subtrees of
 $\bairenodes$
 satisfying the hypotheses of Proposition~\ref{treeavoid}.

The next lemma explains the appearance
 of $I$ in the definition of a
 shrink wrapper.
We are intending the name $\dot{\tau}$
 to be such that
 $\dot{\tau}(n)$ refers to the $x_n$
 in the sequence
 $\mc{X} = \langle x_n : n \in \omega \rangle$.

\begin{lemma}
Consider Sacks forcing $\mbb{S}$.
Let $p \in \mbb{S}$ be a condition
 and $\dot{\tau}$ a name satisfying
 $p \forces \dot{\tau} : \omega \to \baire$.
Then there exists a condition $p' \le p$ and
 there exists a function $I: \omega \to [\baire]^{<\omega}$ satisfying
 $$p' \forces (\forall n \in \omega)\,
  \dot{\tau}(n) \in \check{V} \rightarrow
  \dot{\tau}(n) \in \check{I}(n).$$
% \rightarrow \dot{\tau}(\check{n}) \in \check{D}(\check{n}).$$
\end{lemma}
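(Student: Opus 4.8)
The plan is to build $p'$ by a fusion argument, where at stage $n$ we decide the ``isolated value'' set $I(n)$ and shrink the condition so that $\dot\tau(n)$, if it lands in the ground model, is forced to be one of finitely many ground model reals. First I would observe the key point: if $q \in \mathbb{S}$ is any condition and $\dot\sigma$ is a name with $q \forces \dot\sigma \in \baire$, then there is at most one $x \in \baire$ (in the ground model) for which \emph{some} condition below $q$ forces $\dot\sigma = \check x$ while $q$ does \emph{not} decide $\dot\sigma$ to be a ground model real\,---\,wait, that is false in general. The correct statement to isolate is: for a fixed $n$, the set of ground model reals $x$ such that $q \forces \dot\tau(n) = \check x$ for \emph{some} $q \le p$ in a fixed maximal antichain is at most countable, but we need finiteness, and finiteness per branch, not per condition.

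So instead the right approach is the following. Fix $n$. I claim there is $q \le p$ and a finite set $I(n) \in [\baire]^{<\omega}$ such that $q \forces \dot\tau(n) \in \check V \rightarrow \dot\tau(n) \in \check I(n)$. To see this, suppose not. Then for every $q \le p$ and every finite $F$, there is $q' \le q$ and a ground model real $x \notin F$ with $q' \forces \dot\tau(n) = \check x$. Using this, build a fusion sequence $p = q_0 \ge_0 q_1 \ge_1 \cdots$ together with, at each splitting, pairwise distinct ground model reals assigned to the $2^k$ branches: at stage $k$, for each $k$-th branching node $t$ of $q_k$ with the two nodes above it, refine above each so that $\dot\tau(n)$ is forced equal to a specific ground model real, choosing these $2^{k+1}$ reals distinct from each other and from all reals chosen at earlier stages (possible since at each refinement we may avoid any finite set). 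The limit $q_\omega$ is a Sacks condition below $p$; for each branch $b$ through $q_\omega$, the nodes $b \restriction (\text{stem lengths})$ select a sequence of conditions forcing $\dot\tau(n)$ equal to ever-new distinct ground model reals along incompatible extensions, which forces $\dot\tau(n)$ to be two different things\,---\,contradiction, since below $q_\omega$ the generic selects one branch and along that branch the forced values are eventually constant only if $\dot\tau(n)$ is not in $V$. More cleanly: $q_\omega$ forces that $\dot\tau(n)$, restricted to any initial segment of the generic branch, is compatible with incompatible reals, hence $q_\omega \forces \dot\tau(n) \notin \check V$, which is what we wanted (this single $q_\omega$ with $I(n) = \emptyset$ already works). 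Thus for each $n$ we get $q_n \le p$ and $I(n)$.

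Finally I would diagonalize over $n \in \omega$ by fusion. Build $p = p^{(0)} \ge_0 p^{(1)} \ge_1 p^{(2)} \ge_2 \cdots$ where $p^{(n+1)} \le_n p^{(n)}$ is obtained by applying the single-$n$ claim below $p^{(n)}$ \emph{on each of the finitely many pieces above the $n$-th branching nodes} of $p^{(n)}$ and amalgamating; this yields $I(n) \in [\baire]^{<\omega}$ (the union of finitely many finite sets) with $p^{(n+1)} \forces \dot\tau(n) \in \check V \rightarrow \dot\tau(n) \in \check I(n)$, and the relation $\le_n$ guarantees all lower-indexed decisions are preserved. Let $p' := \bigcap_n p^{(n)}$, a Sacks condition below $p$ by the Fusion Lemma. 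Since $p' \le p^{(n+1)}$ for each $n$, we get $p' \forces (\forall n)\, \dot\tau(n) \in \check V \rightarrow \dot\tau(n) \in \check I(n)$, as desired. The main obstacle is the per-$n$ claim: one must be careful that ``$\dot\tau(n)$ is not forced into any finite set'' genuinely produces a fusion along which distinct ground model reals appear on incompatible conditions, so that the fusion limit forces $\dot\tau(n) \notin \check V$; the choice of distinct reals at each refinement step (avoiding a finite set each time) is exactly what makes this work.
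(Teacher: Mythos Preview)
Your final fusion over $n$ is correct and is essentially the paper's proof: the paper builds $R : \cantornodes \to \mbb{S}$ satisfying the Fusion Helper Lemma with $R(\emptyset) \le p$ and, for each $s \in {}^n 2$, either $R(s) \forces \dot\tau(n) \notin \check V$ or $R(s) \forces \dot\tau(n) = \check x$ for some specific $x$; then $I(n) := \{x : (\exists s \in {}^n 2)\, R(s) \forces \dot\tau(n) = \check x\}$ has size at most $2^n$, and $p' := \bigcap_n \bigcup_{s \in {}^n 2} R(s)$ works.

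Your separate per-$n$ claim is an unnecessary detour, and the contradiction argument you give for it is flawed. The fusion you describe cannot proceed past stage $0$: once you have refined a piece below $p$ to force $\dot\tau(n) = \check x_0$, every further refinement of that piece still forces $\dot\tau(n) = \check x_0$, so at stage $1$ you cannot force it equal to some new $y \ne x_0$ on a sub-piece. (In fact this very observation yields the contradiction immediately from your stated negation, with no fusion needed: apply it with $q$ equal to that refined piece and $F = \{x_0\}$.) The per-$n$ claim itself is trivial and needs no argument by contradiction: for any $q$, either some $q' \le q$ forces $\dot\tau(n)$ equal to a specific ground-model real $x$ (take that $q'$ and $I(n) = \{x\}$), or no such $q'$ exists, in which case $q$ already forces $\dot\tau(n) \notin \check V$ (take $I(n) = \emptyset$). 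The paper just uses this dichotomy directly on each $R(s)$ inside a single fusion, without isolating a separate lemma.
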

\begin{proof}
We may easily construct a function
 $R: {^{<\omega}2} \to \mbb{S}$ that satisfies
 the conditions of Lemma~\ref{fusionhelper}
 such that $R(\emptyset) \le p$
 and for each $s \in {^n 2}$,
 either $R(s) \forces \dot{\tau}(n) \not\in \check{V}$
 or $(\exists x \in \baire)\, R(s) \forces \dot{\tau}(n) = \check{x}.$
% for some $x$.
Define $I$ as follows:
 $$I(n) := \{ x \in \baire :
 (\exists s \in {{^n}2})\, R(s) \forces \dot{\tau}(n) = \check{x} \}.$$
Let $p' := \bigcap_n \bigcup \{ R(s) : s \in {{^n}2} \}$.
The condition $p'$ and the function $I$ are as desired.
\end{proof}

We are now ready for the main
 forcing argument of this section.
\begin{theorem}
\label{mainsacksforcingargument}
Consider Sacks forcing $\mbb{S}$.
Let $p \in \mbb{S}$ be a condition and
 $\dot{\tau}$ be a name satisfying
 $p \forces \dot{\tau} : \omega \to \baire$.
Then there exists a condition $q \le p$
 and there exists
 $\mc{W} = \langle \mc{F}, I \rangle$ satisfying
 $$q \forces \check{\mc{W}}
 \mbox{ is a shrink wrapper for }
 \langle \dot{\tau}(n) : n \in \omega \rangle.$$
\end{theorem}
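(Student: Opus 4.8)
The plan is to read the shrink wrapper off the limit of a single fusion of Sacks conditions below $p$. First I would apply the lemma immediately preceding this theorem to obtain $p'\le p$ and a function $I:\omega\to[\baire]^{<\omega}$ with $p'\forces(\forall n)(\dot\tau(n)\in\check V\to\dot\tau(n)\in\check I(n))$. Next I would build a function $R:\cantornodes\to\mbb{S}$ obeying the hypotheses of Lemma~\ref{fusionhelper} with $R(\emptyset)\le p'$; by that lemma together with the Fusion Lemma, $q:=\bigcap_{m}\bigcup\{R(s):s\in{}^{m}2\}$ is a condition, and for $s\in{}^{m}2$ the piece $q_s:=q\cap R(s)$ is again a Sacks condition (apply Lemma~\ref{fusionhelper} to the restriction of $R$ to the cone above $s$), the sets $[q_s]$ for $s\in{}^{m}2$ being pairwise disjoint with union $[q]$, so that $q$ forces the Sacks real to pick out a unique $s^{\ast}\in{}^{m}2$ with $q_{s^{\ast}}$ in the generic. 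I would arrange the recursion so that by the time the level-$\tilde{n}$ conditions $\{R(s):s\in{}^{\tilde{n}}2\}$ are split into their children, each such $R(s)$ has decided, for every $n\in\eta(\tilde{n})$, whether $\dot\tau(n)\in\check V$ and, if so, which element of $I(n)$ it equals. Finally, for $\tilde{n}\in\omega$, $n\in\eta(\tilde{n})$ and $s\in{}^{\tilde{n}}2$, put $F_{\tilde{n},n}(s):=[[x]]$ if $q_s\forces\dot\tau(n)=\check x$, and otherwise let $F_{\tilde{n},n}(s)$ be the tree of all $t\in\bairenodes$ such that some $r\le q_s$ forces $\dot\tau(n)\sqsupseteq\check t$ (the wrapping tree of $\dot\tau(n)$ over $q_s$). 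Each such tree is leafless and satisfies $q_s\forces\dot\tau(n)\in[\check F_{\tilde{n},n}(s)]$, so clause~2 of Definition~\ref{srdef} follows from the previous sentence about $s^{\ast}$.

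The recursion interleaves two families of tasks (via a bookkeeping surjection $\omega\to\omega\times\omega$) while respecting the two hypotheses of Lemma~\ref{fusionhelper} --- each $R(s^\frown i)\le R(s)$, and $\mbox{Stem}(R(s^\frown 0))\perp\mbox{Stem}(R(s^\frown 1))$ --- which by that lemma automatically makes the sequence a fusion sequence. \textit{Separation tasks}: for each $\tilde{n}$, at the designated stage just before the level-$\tilde{n}$ conditions are split, we run --- for all $s_1,s_2\in{}^{\tilde{n}}2$ and all ways of assigning the two names $\dot\tau(n_1),\dot\tau(n_2)$ (where $\{n_1,n_2\}=\eta(\tilde{n})$) to the two slots --- one application of Lemma~\ref{differentdrum} to the relevant pair of conditions: whenever no ground-model real is forced as a common value by those two conditions, the lemma lets us refine them and fix incompatible finite initial segments of the two names; otherwise the common value already lies in the relevant $I$-set. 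There are only finitely many such applications per $\tilde{n}$, each a refinement, so they compose and remain consistent with splitting afterwards, and every later refinement preserves the fixed initial segments by monotonicity of forcing. \textit{Thinning tasks}: for each triple $(\tilde{n},n,s)$ with $n\in\eta(\tilde{n})$ and $s\in{}^{\tilde{n}}2$, a sub-fusion carried out below the cone of conditions above $R(s)$ which, using $\lim_{l}f_{\Phi(s,n)}(l)=\infty$ (a consequence of $(i,l)\mapsto f_i(l)$ and $\Phi$ being finite-to-one), thins enough that the limiting wrapping tree $F_{\tilde{n},n}(s)$ obeys $f_{\Phi(s,n)}$; this is exactly the standard proof that $\mbb{S}$ has the Sacks property, performed below $q_s$.

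With $q$, $I$ and the $F_{\tilde{n},n}(s)$ as above, it remains to check that $q$ forces $\mc{W}=\langle\mc{F},I\rangle$ to be a shrink wrapper for $\langle\dot\tau(n):n\in\omega\rangle$. Clauses~1 and~2 are immediate. For clauses~3 and~4 fix $\tilde{n}$ with $\{n_1,n_2\}=\eta(\tilde{n})$ and set $C_1=[F_{\tilde{n},a}(s_1)]$, $C_2=[F_{\tilde{n},b}(s_2)]$ (with $\{a,b\}=\{n_1,n_2\}$ for clause~3 and $a=b$ for clause~4). If the matching instance of Lemma~\ref{differentdrum} in the separation tasks succeeded, $C_1$ and $C_2$ consist only of branches extending incompatible nodes, hence $C_1\cap C_2=\emptyset$ (clauses~3c/4b, the ``therefore'' parts being K\"onig's lemma as noted after Definition~\ref{srdef}). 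If it did not succeed, the two conditions force the relevant names equal to one common ground-model real $x$, so $C_1=C_2=\{x\}$ with $x$ in the appropriate $I$-set(s) (clauses~3b/4a). The only remaining configuration is $s_1=s_2=s$ with $q_s\forces\dot\tau(n_1)=\dot\tau(n_2)$ where this common name is forced not to be in $\check V$: then $F_{\tilde{n},n_1}(s)$ and $F_{\tilde{n},n_2}(s)$ are literally the same tree, so $C_1=C_2$; and since the clause~4 separations (for $n_1$, resp.\ $n_2$, between $s$ and any $s'\ne s$) cannot here be of the isolated-point type, $q$ forces that $x_{n_1}\in C_1$ implies the Sacks real runs through $q_s$, hence implies $x_{n_1}=x_{n_2}$, and symmetrically for $x_{n_2}$; this is clause~3a. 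Clause~4 for $s_1=s_2$ is trivial.

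I expect the principal obstacle to be the bookkeeping itself: designing a single fusion that simultaneously (i) finalizes each level-$\tilde{n}$ family $\{R(s):s\in{}^{\tilde{n}}2\}$ only after all of the finitely many pair-$\tilde{n}$ separation refinements and the relevant $\check V$-membership and value decisions have been applied to it, so that $\mc{F}$ genuinely consists of functions with domains ${}^{\tilde{n}}2$; (ii) runs the countably many obeying-$f$ sub-fusions below the various $q_s$ without mutual interference; and (iii) keeps each $R(s^\frown i)$ below $R(s)$ with incompatible sibling stems so that the Fusion Lemma delivers $q$ --- in short, correctly executing a ``fusion of fusions.'' The most delicate point in the verification is clause~3, where one must reconcile the $s_1=s_2$ case, the finitely many ground-model ``isolated point'' cases recorded in $I$, and the fact that the two wrapping trees being compared may wrap values of $\dot\tau(n_1)$ and $\dot\tau(n_2)$ that are never realized simultaneously.
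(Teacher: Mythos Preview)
Your plan is essentially the paper's: pass to $p'$ and $I$ via the preceding lemma, build $R:\cantornodes\to\mbb{S}$ satisfying the hypotheses of Lemma~\ref{fusionhelper}, let $q$ be the limit of the resulting fusion, use Lemma~\ref{differentdrum} for the pairwise separations, and invoke the standard Sacks-property argument to make each $F_{\tilde n,n}(s)$ obey $f_{\Phi(s,n)}$. The paper organizes this stage by stage in $\tilde n$: at stage $\tilde n$ it fixes $R\restriction{}^{\tilde n}2$ and both $F_{\tilde n,n_1},F_{\tilde n,n_2}$ outright, via five explicit steps (split with incompatible stems; decide $\dot\tau(n)\in\check V$; separate $F_{\tilde n,n}(s_1)$ from $F_{\tilde n,n}(s_2)$ for each fixed $n$; separate $F_{\tilde n,n_1}(s_1)$ from $F_{\tilde n,n_2}(s_2)$ for $s_1\neq s_2$; handle the diagonal $s_1=s_2$). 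You instead define $F_{\tilde n,n}(s)$ as the wrapping tree of $\dot\tau(n)$ over the final $q_s$ and interleave the obeying-$f$ sub-fusions by bookkeeping; this comes to the same thing and your verification of clause~3a via the clause-4 separations is exactly the paper's argument.

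One genuine slip: for the diagonal case $s_1=s_2=s$ with the two \emph{different} names $\dot\tau(n_1),\dot\tau(n_2)$, Lemma~\ref{differentdrum} is not literally the right tool. Feeding it $p_0=p_1=R(s)$ returns \emph{two} refinements $p_0',p_1'\le R(s)$, possibly incompatible, whereas you need a single successor for $R(s)$ in the fusion. The paper's Step~5 handles this directly: if $R(s)\not\forces\dot\tau(\check n_1)=\dot\tau(\check n_2)$, pass to a single $r\le R(s)$ forcing $\dot\tau(\check n_1)\neq\dot\tau(\check n_2)$ and then decide long enough initial segments of both names below $r$ to make them incompatible. Your verification paragraph already anticipates exactly the right trichotomy for $s_1=s_2$, so only the description of the separation tasks needs this small correction.
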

\begin{proof}
First, let $p' \le p$
 and $I : \omega \to [\baire]^{<\omega}$
 be given by the lemma above.
That is, for each $n \in \omega$,
 $$p' \forces
 \dot{\tau}(\check{n}) \in \check{V} \rightarrow
 \dot{\tau}(\check{n}) \in \check{I}(\check{n}).$$
We will define a function
 $R : {^{<\omega}2} \to \mbb{S}$
 with $R(\emptyset) \le p'$
 satisfying conditions 1) and 2) of
 Lemma~\ref{fusionhelper}.
At the same time, we will construct
 a family of functions
 $$\mc{F} = \langle F_{\tilde{n},n} :
 \tilde{n} \in \omega, n \in \eta(\tilde{n}) \rangle.$$
Let $\mc{W} = \langle \mc{F}, I \rangle$.
Our $q$ will be
 $$q := \bigcap_{\tilde{n}}
 \bigcup_{s \in {^{\tilde{n}}2}} R(s).$$
% \{ R(s) : s \in {^{\tilde{n}} 2} \}.$$

The function $F_{\tilde{n},n}$
 will return leafless subtrees of $\bairenodes$.
Moreover, each tree
 $F_{\tilde{n},n}(s)$ will obey the function
 $f_{\Phi(s,n)}$.
We will have it so
 for all $n \in \omega$
 and all $\tilde{n}$ satisfying $n \in \eta(\tilde{n})$,
 $$(\forall s \in {^{\tilde{n}}2})\,
 R(s) \forces \dot{\tau}(\check{n}) \in
 [\check{F}_{\tilde{n},n}(\check{s})].$$
Thus, $q$ will force
 that $\mc{W}$ satisfies conditions 1) and 2)
 of the definition of a shrink wrapper.
To show that $q$ forces condition 3)
 of that definition, it suffices 
 to show that for all $\{n_1, n_2\} = \eta(\tilde{n})$
 and all $s_1, s_2 \in {^{\tilde{n}}2}$,
 one of the following holds,
 where $T_1 := F_{\tilde{n},n_1}(s_1)$ and
 $T_2 := F_{\tilde{n},n_2}(s_2)$:
\begin{itemize}
\item[3a$'$)] $T_1 = T_2$ and $(\forall s \in {^{\tilde{n}}2})$,
%\begin{eqnarray*}
% R(s) & \forces &
%          \dot{\tau}(\check{n}_1) \in [\check{T}_1] \\
% & \vee & \dot{\tau}(\check{n}_2) \in [\check{T}_2] \\
% & \rightarrow & \dot{\tau}(\check{n}_1) =
%                 \dot{\tau}(\check{n}_2);
%\end{eqnarray*}
 $$R(s) \forces
  (\dot{\tau}(\check{n}_1) \in [\check{T}_1] \vee
   \dot{\tau}(\check{n}_2) \in [\check{T}_2]) \rightarrow
   \dot{\tau}(\check{n}_1) =
   \dot{\tau}(\check{n}_2) ;$$
\item[3b$'$)] $(\exists x \in I(n_1) \cap I(n_2))\,
 [T_1] = [T_2] = \{ x \}$;
\item[3c$'$)] $[T_1] \cap [T_2] = \emptyset$, and moreover
 $\mbox{Stem}(T_1) \perp \mbox{Stem}(T_2)$.
\end{itemize}

We will define the functions $F_{\tilde{n},n}$
 and the conditions $R(s)$ for $s \in {^{\tilde{n}}2}$
 by induction on $\tilde{n}$.
Beginning at $\tilde{n} = 0$,
 let $\{ n_1, n_2 \} = \eta(0)$.
We will define
 $F_{0,n_1}$,
 $F_{0,n_2}$, and
 $R(\emptyset) \le p'$.

If $p' \forces
 \dot{\tau}(\check{n}_1) =
 \dot{\tau}(\check{n}_2)$,
 then using the Sacks property
 let $R(\emptyset) \le p'$ and
 $F_{0,n_1}(\emptyset) =
  F_{0,n_2}(\emptyset) = T$,
 where $T \subseteq \bairenodes$ is
 a tree that obeys both
 $f_{\Phi(\emptyset,n_1)}$ and
 $f_{\Phi(\emptyset,n_2)}$,
 such that
 $R(\emptyset) \forces \dot{\tau}(\check{n}_1) \in [\check{T}]$.
%Such a $T$ is guaranteed to exist
% because $\mbb{S}$ has the Sacks property.
This causes 3a$'$) to be satisfied.

If $p' \not\forces \dot{\tau}(\check{n}_1)
 = \dot{\tau}(\check{n}_2)$,
 then let $t_1, t_2 \in \bairenodes$ be incomparable nodes
 and let $p_1 \le p'$ satisfy
 $p_1 \forces \dot{\tau}(\check{n}_1)
 \sqsupseteq \check{t}_1$ and
 $p_1 \forces \dot{\tau}(\check{n}_2)
 \sqsupseteq \check{t}_2$.
Then, using the Sacks property, we may define
 $R(\emptyset) \le p_1$ and
 $F_{0,n_1}(\emptyset) = T_1$ and
 $F_{0,n_2}(\emptyset) = T_2$ where
 $T_1$ and $T_2$ are leafless trees
 that obey
 $f_{\Phi(\emptyset,n_1)}$ and
 $f_{\Phi(\emptyset,n_2)}$ respectively
 such that
 $\mbox{Stem}(T_1) \sqsupseteq t_1$,
 $\mbox{Stem}(T_2) \sqsupseteq t_2$,
 $R(\emptyset) \forces
 \dot{\tau}(\check{n}_1) \in [\check{T}_1]$, and
 $R(\emptyset) \forces
 \dot{\tau}(\check{n}_2) \in [\check{T}_2]$.
This causes
 3c$'$) to be satisfied.

We will now handle the successor step of
 the induction.
Let $\{ n_1, n_2 \} = \eta(\tilde{n})$
 for some $\tilde{n} > 0$.
We will define
 $R(s)$ for each $s \in {^{\tilde{n}}2}$,
 and we will define both
 $F_{\tilde{n},n_1}$ and
 $F_{\tilde{n},n_2}$ assuming
 $R(s')$ has been defined
 for each $s' \in {^{< \tilde{n}}2}$.
To keep the construction readable,
 we will start with initial values for the
 $R(s)$'s and the $F_{\tilde{n},n}$'s,
 and we will modify them as the construction
 progresses until we arrive at their final values.
That is, we will say
 ``replace $R(s)$ with a stronger condition...'' and
 ``shrink the tree $F_{\tilde{n},n}(s)$...''.
When we make these replacements,
 it is understood that still
 $R(s) \forces \dot{\tau}(\check{n}) \in
 [\check{F}_{\tilde{n},n}(\check{s})]$.
The construction consists of 5 steps.

\underline{Step 1}:
% OLD WAY: 
% First, for each
%  $s \in {^{(\tilde{n}-1)}2}$,
%  let $R(s ^\frown 0)$ and
%  $R(s ^\frown 1)$ be arbitrary extensions
%  of $R(s)$
%  such that
%  $\mbox{Stem}(R(s ^\frown 0)) \perp
%   \mbox{Stem}(R(s ^\frown 1))$.
% Also, for each $n \in \{n_1, n_2\}$
%  and $s \in {^{\tilde{n}}2}$,
%  let $F_{\tilde{n},n}(s)$ be a
%  leafless subtree of $\bairenodes$
%  that obeys $f_{\Phi(s,n)}$
%  and satisfies
%  $R(s) \forces \dot{\tau}(\check{n}) \in
%  [\check{F}_{\tilde{n},n}(\check{s})]$.
%NEW WAY:
 Let $R(s)$ for $s \in {^{\tilde{n}} 2}$
 and $F_{n,\tilde{n}}(s)$ for $s \in {^{\tilde{n}} 2}$
 and $n \in \{ n_1, n_2 \}$
 satisfy the following:
\begin{itemize}
\item[1)] For each $s' \in {^{(\tilde{n}-1)}2}$,
 the conditions $R({s'} ^\frown 0)$ and $R({s'} ^\frown 1)$
 both extend $R({s'})$ and they satisfy
 $\mbox{Stem}(R({s'} ^\frown 0)) \perp
 \mbox{Stem}(R({s'} ^\frown 1))$.
\item[2)] For $s \in {^{\tilde{n}} 2}$
 and $n \in \{ n_1, n_2 \}$,
 $F_{n,\tilde{n}}(s)$ is a leafless subtree
 of $\bairenodes$ that obeys $f_{\Phi(s,n)}$ and
 $R(s) \forces \dot{\tau}(\check{n}) \in [\check{F}_{\tilde{n},n}(\check{s})]$
\end{itemize}
Condition 2) can be arranged by the Sacks property.

\underline{Step 2}:
For each $s \in {^{\tilde{n}}2}$
 and $n \in \{n_1, n_2\}$,
 strengthen $R(s)$ so that either
 $R(s) \forces \dot{\tau}(\check{n}) \not\in \check{V}$ or
 $(\exists x \in I(n))\,
 R(s) \forces \dot{\tau}(\check{n}) = \check{x}$.
If the latter case holds,
 shrink $F_{\tilde{n},n}(s)$ so that
 it has only one path.

\underline{Step 3}:
For this step, fix $n \in \{ n_1, n_2 \}$.
For each pair
 of distinct $s_1, s_2 \in {^{\tilde{n}}2}$,
 strengthen each $R(s_1)$ and $R(s_2)$ and
 shrink each $F_{\tilde{n},n}(s_1)$ and
 $F_{\tilde{n},n}(s_2)$ so that
 one of the following holds:
\begin{itemize}
\item[i)] $(\exists x \in I(n))\,
 [F_{\tilde{n},n}(s_1)] =
 [F_{\tilde{n},n}(s_2)] =
 \{ x \}$;
\item[ii)]
% $[F_{\tilde{n},n}(s_1)] \cap
%  [F_{\tilde{n},n}(s_2)] = \emptyset$,
% and moreover
 $\mbox{Stem}(F_{\tilde{n},n}(s_1)) \perp
  \mbox{Stem}(F_{\tilde{n},n}(s_2)).$
\end{itemize}
That is, if i) cannot be satisfied,
 then we may use Lemma~\ref{differentdrum}
 to satisfy ii).

\underline{Step 4}:
For each pair of distinct
 $s_1, s_2 \in {^{\tilde{n}}2}$ such that
 either $R(s_1) \forces
 \dot{\tau}(\check{n}_1) \not\in \check{V}$
 or $R(s_2) \forces
 \dot{\tau}(\check{n}_2) \not\in \check{V}$,
 use Lemma~\ref{differentdrum}
 to strengthen $R(s_1)$ and $R(s_2)$ and shrink
 $F_{\tilde{n},n_1}(s_1)$ and
 $F_{\tilde{n},n_2}(s_1)$ so that
 $$\mbox{Stem}(F_{\tilde{n},n_1}(s_1)) \perp
   \mbox{Stem}(F_{\tilde{n},n_2}(s_2)).$$

\underline{Step 5}:
For each $s \in {^{\tilde{n}}2}$,
 do the following:
If $R(s) \forces \dot{\tau}(\check{n}_1)
 = \dot{\tau}(\check{n}_2)$, then
 replace both
 $F_{\tilde{n},n_1}(s)$ and
 $F_{\tilde{n},n_2}(s)$ with
 $F_{\tilde{n},n_1}(s) \cap
  F_{\tilde{n},n_2}(s)$.
Otherwise,
 strengthen $R(s)$
 and shrink
 $F_{\tilde{n},n_1}(s)$ and
 $F_{\tilde{n},n_2}(s)$ so that
 $$\mbox{Stem}(F_{\tilde{n},n_1}(s)) \perp
   \mbox{Stem}(F_{\tilde{n},n_2}(s)).$$

This completes the construction
 of $\{ R(s) : s \in {^{\tilde{n}} 2} \}$,
 $F_{\tilde{n},n_1}$, and
 $F_{\tilde{n},n_2}$.
We will now prove that it works.
Fix $\tilde{n} \in \omega$ and
 $s_1, s_2 \in {^{\tilde{n}}2}$.
Let $T_1 := F_{\tilde{n},n_1}(s_1)$
 and $T_2 := F_{\tilde{n},n_2}(s_2)$.
We must show that one of
 3a$'$),
 3b$'$), or
 3c$'$) holds.
The cleanest way to do this is
 to break into cases depending on
 whether or not $s_1 = s_2$.

\underline{Case $s_1 \not= s_2$}:
If either
 $R(s_1) \forces \dot{\tau}(\check{n}_1) \not\in \check{V}$ or
 $R(s_2) \forces \dot{\tau}(\check{n}_2) \not\in \check{V}$,
 then by Step 4, we see that 3c$'$) holds.
Otherwise,
 by Step 2,
 $(\exists x \in I(n_1))\, [T_1] = \{x\}$ and
 $(\exists x \in I(n_1))\, [T_2] = \{x\}$.
% both $T_1$ and $T_2$ only have a single path.
Hence, either
 3b$'$) or 3c$'$) holds.

\underline{Case $s_1 = s_2$}:
If $R(s_1) \not\forces
 \dot{\tau}(\check{n}_1) =
 \dot{\tau}(\check{n}_2)$,
 then by Step 5, we see that 3c$'$) holds.
Otherwise, we are in the case that
 $$R(s_1) \forces
 \dot{\tau}(\check{n}_1) =
 \dot{\tau}(\check{n}_2).$$
By Step 5, $T_1 = T_2$.
Now, if
 $R(s_1) \forces
 \dot{\tau}(\check{n}_1) \in \check{V}$,
 then of course also
 $R(s_1) \forces
 \dot{\tau}(\check{n}_2) \in \check{V}$,
 and by Step 2) we see that
 3b$'$) holds.
Otherwise, $R(s_1) \forces
 \dot{\tau}(\check{n}_1) \not\in \check{V}$.
Hence, $[T_1]$ is not a singleton.
We will show that 3a$'$) holds.
Consider any
 $s \in {^{\tilde{n}}2}$.
We must show
 $$R(s) \forces
 (\dot{\tau}(\check{n}_1) \in [\check{T}_1]  \vee
  \dot{\tau}(\check{n}_2) \in [\check{T}_1]) \rightarrow
 \dot{\tau}(\check{n}_1) =
 \dot{\tau}(\check{n}_2).$$
If $s = s_1$, we are done.
Now suppose $s \not= s_1$.
It suffices to show
 $$R(s) \forces
 \neg( \dot{\tau}(\check{n}_1) \in [\check{T}_1] \vee
 \dot{\tau}(\check{n}_2) \in [\check{T}_1] ).$$
That is, it suffices to show
 $R(s) \forces
 \dot{\tau}(\check{n}_1) \not\in [\check{T}_1]$ and
 $R(s) \forces
 \dot{\tau}(\check{n}_2) \not\in [\check{T}_1].$
Since $s \not= s_1$ and
 $[T_1]$ is not a singleton,
 by Step 3,
 $\mbox{Stem}(F_{\tilde{n},n}(s)) \perp \mbox{Stem}(T_1)$.
Recall that
 $$R(s) \forces \dot{\tau}(\check{n}_1)
 \in [\check{F}_{\tilde{n},n}(\check{s})].$$
Hence,
 since $[\check{F}_{\tilde{n},n}(\check{s})]
 \cap [T_1] = \emptyset$,
 $R(s) \forces \dot{\tau}(\check{n}_1) \not\in
 [\check{T}_1]$.
By a similar argument,
 $R(s) \forces
 \dot{\tau}(\check{n}_2) \not\in [\check{T}_1]$.
This completes the proof.
\end{proof}

\section{Silver Forcing}

In this section,
 we will show that the shrink wrapping property
 does not hold between the ground model
 and any Silver forcing extension. To learn more
 about Silver forcing, the reader may consult \cite{Combinatorial}.
\begin{definition}
A tree $T \subseteq {^{<\omega}2}$
 is a \defemp{Silver} tree iff
 it is leafless and the following
 are satisfied.
There is an infinite set of levels
 $L \subseteq \omega$ such that for each
 $t \in T$,
 if $\dom(t) \in L$, then both
 $t ^\frown 0$ and $t ^\frown 1$ are in $T$,
 and if $\dom(t) \not\in L$, then exactly one of
 $t ^\frown 0$ or $t ^\frown 1$ is in $T$.
Also, if $x_1, x_2 \in [T]$ are two paths through $T$
 and $l \not\in L$, then
 $x_1(l) = x_2(l)$.
The poset of all Silver trees ordered by
 inclusion is called Silver forcing
 $\mbb{V}$.
\end{definition}

\begin{fact}
Suppose $G$ is $\mbb{V}$-generic over $V$.
Let $g = \bigcap G$. Then
 $$\{ T \in \mbb{V} : g \in [T] \} = G.$$
For this reason, we will sometimes say that
 $g$ is $\mbb{V}$-generic over $V$.
\end{fact}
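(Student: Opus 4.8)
The plan is to read off $g$ from $G$ and then push both inclusions through suitable dense sets. Recall that a Silver tree $T$ is coded by a partial function $p_T : \omega \rightharpoonup \{0,1\}$ whose domain is co-infinite: the levels outside $\dom(p_T)$ are exactly the splitting levels $L_T := \omega \setminus \dom(p_T)$, every path of $T$ takes the value $p_T(l)$ at each $l \in \dom(p_T)$, and $S \le T$ (i.e.\ $S \subseteq T$) iff $p_S \supseteq p_T$. First I would observe that for each $n \in \omega$ the set $E_n := \{ T \in \mbb{V} : n \in \dom(p_T) \}$ is dense, since from any $S$ with $n \in L_S$ one may pass to the subtree of $S$ pinning the $n$-th coordinate (deleting a single splitting level leaves a Silver tree). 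Since $G$ is a filter meeting every $E_n$, the partial functions $\{ p_T : T \in G \}$ are pairwise compatible with union a total function $g$ taking values in $\{0,1\}$, and one checks directly that $\bigcap G = \{ g \restriction n : n \in \omega \}$. This justifies writing ``$g = \bigcap G$'' and already gives $g \in [T]$ for every $T \in G$, that is, the inclusion $G \subseteq \{ T \in \mbb{V} : g \in [T] \}$.

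For the reverse inclusion, fix a Silver tree $T$ with $g \in [T]$ and set
$$D_T := \{ S \in \mbb{V} : S \le T \text{ or } [S] \cap [T] = \emptyset \}.$$
I would show $D_T$ is dense. Let $S_0 \in \mbb{V}$; if $[S_0] \cap [T] = \emptyset$ we are done, so assume not, which means $p_{S_0}$ and $p_T$ are compatible as partial functions (a common path witnesses this). If $L_{S_0} \cap L_T$ is infinite, then $p_{S_0} \cup p_T$ has co-infinite domain and hence codes a Silver tree lying below both $S_0$ and $T$. If instead $L_{S_0} \cap L_T$ is finite, then $L_{S_0} \setminus L_T$ is infinite; choosing any $n$ in it, the $n$-th coordinate is free in $S_0$ but pinned in $T$, so the subtree of $S_0$ that fixes the $n$-th coordinate to $1 - p_T(n)$ has branch set disjoint from $[T]$. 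Either way we have found an element of $D_T$ below $S_0$, so $D_T$ is dense.

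To finish, use genericity to pick $S \in G \cap D_T$. If $S \le T$, then $T \in G$ because $G$ is upward closed. The alternative $[S] \cap [T] = \emptyset$ cannot occur, since $S \in G$ gives $g \in [S]$ while $g \in [T]$ by assumption. Hence $T \in G$, which establishes $\{ T \in \mbb{V} : g \in [T] \} \subseteq G$ and completes the proof. The concluding remark of the statement --- that we may call $g$ itself ``$\mbb{V}$-generic over $V$'' --- is then merely terminology, licensed by the equality proved.

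The step I expect to be the main obstacle is the density of $D_T$, and within it the subcase where $L_{S_0} \cap L_T$ is finite. The point there is that $L_{S_0}$ is infinite while $L_{S_0} \cap L_T$ is finite, so $S_0$ has splitting levels --- in fact infinitely many --- at which $T$ has already committed to a value, and disagreeing with $T$ at one such level refines $S_0$ to a Silver tree whose branch set misses $[T]$. This is the one place where the combinatorics peculiar to Silver trees, as opposed to the tidier situation for Cohen or Sacks forcing, genuinely enters.
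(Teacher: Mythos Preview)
The paper states this result as a \emph{Fact} and offers no proof; it is treated as standard background about Silver forcing. Your argument is correct and supplies exactly the details one would expect: the coding of Silver trees by partial functions with co-infinite domain, the density of the sets $E_n$ to produce a total $g$, and the density of $D_T$ to recover $G$ from $g$. The case split on whether $L_{S_0}\cap L_T$ is infinite is the natural one, and your handling of the finite-intersection subcase is fine. There is nothing to compare against in the paper itself, so this stands as a clean write-up of a folklore fact.
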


\begin{definition}
Let $p \subseteq {^{<\omega}2}$ be a tree.
Let
 $t, s \in p$ be such that $\dom(t) = \dom(s)$.
When we say ``replace p below t with
 p below s'', we mean replace $p$ with
 $$\{ u \in p : u \not\sqsupseteq t \} \cup
 \{ t ^\frown w : s ^\frown w \in p \}.$$
That is, the subtree of $p$ below $s$
 is replacing the subtree of $p$ below $t$.
\end{definition}

In the following we will talk about elementary submodels
 of $V$, but we might as well be talking about elementary
 submodels of $V_\Theta \subseteq V$ for some large enough
 ordinal $\Theta$.

Recall that given a tree $T$ and a node $t$,
 the tree $T | t$ is the set of all
 nodes $s \in T$
 that are comparable to $t$.

\begin{lemma}
\label{allbranches}
Let $M$ be a countable elementary submodel of $V$
 and let $p \in \mbb{V}$ be in $M$.
Then there is some $p' \le p$
 (not in $M$) such that each branch
 through $p'$ is $\mbb{V}$-generic over $M$.
\end{lemma}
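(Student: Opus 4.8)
The plan is to build $p' \le p$ by a fusion argument inside which we diagonalize against all dense subsets of $\mbb{V}$ lying in $M$. Since $M$ is countable, fix an enumeration $\langle D_k : k \in \omega \rangle$ of all dense open subsets of $\mbb{V}$ that belong to $M$. We will construct a fusion sequence of Silver conditions, working below $p$, so that after we have ``used up'' the splitting levels up through stage $k$, every resulting branch has passed through $D_k$. The point is that a single step of a Silver-tree fusion, unlike the Sacks case, must treat all $2^{k}$ nodes on the $k$-th splitting level uniformly (the coordinates outside $L$ are constant across $[T]$), but the operation ``replace $p$ below $t$ with $p$ below $s$'' from the displayed definition lets us graft in a generic-enough subtree without destroying Silverness, since it only changes coordinates at levels in $L$.

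Here is the step structure. First I would set up the bookkeeping: a fusion sequence $p = p_0 \ge p_1 \ge p_2 \ge \cdots$ of Silver conditions together with an increasing sequence of finite approximations $L_k$ to the eventual splitting set $L = \bigcup_k L_k$, where at stage $k$ we only commit to finitely many splitting levels and leave the tail free. Second, the key single step: given $p_k$ with committed splitting levels $L_k$, let $m$ be the least level not yet committed and let $t_1, \dots, t_{2^{|L_k|}}$ enumerate the nodes of $p_k$ at some level above all of $L_k$, one above each splitting branch. I would process them one at a time: for the $j$-th branch, take a condition $q_j \le (p_k$ restricted below $t_j)$ which lies in $D_k$ (possible since $D_k$ is dense and $t_j \in M$ forces nothing problematic — note $q_j$ need not be in $M$, and that's fine since we only need $D_k \in M$). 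Then use ``replace $p_k$ below $t_j$ with $q_j$'' — after renaming, this amounts to grafting the subtree $q_j$ in place of the old subtree below $t_j$. Since all the $q_j$ are themselves Silver trees, after doing this for every $j$ and then identifying a common infinite set of splitting levels for all the grafted pieces (intersecting their splitting sets), the result $p_{k+1}$ is again a Silver condition, $p_{k+1} \le_{?} p_k$ in the appropriate fusion sense (it agrees with $p_k$ on and below the committed splitting levels $L_k$), and crucially $p_{k+1} | u \in D_k$ for every node $u$ at the level where we did the grafting. Third, take $p' := \bigcap_k p_k$. Standard Silver fusion (the committed levels $L_k$ stabilize from below, branching is preserved at every committed level, and exactly one successor is kept at uncommitted-but-frozen levels) shows $p'$ is a Silver tree with $p' \le p$. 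Finally, given any branch $x \in [p']$, for each $k$ the branch passes through one of the grafted subtrees at stage $k$, hence some initial condition of the form $p' | u$ (which is $\le$ some element of $D_k$) is in the generic filter determined by $x$; since this holds for all $k$, every dense $D_k \in M$ is met, so $x$ is $\mbb{V}$-generic over $M$.

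The main obstacle, and the place where care is genuinely needed, is verifying that the grafting operation in the single step both (a) keeps the object a \emph{Silver} tree — one must reconcile the different infinite splitting sets of the various grafted pieces $q_j$ into one common infinite set, and ensure the ``frozen'' coordinates outside $L$ really do stay constant across all of $[p_{k+1}]$, which forces us to further thin each $q_j$ so that below the graft point they all use the \emph{same} splitting levels and the \emph{same} values off those levels; and (b) is compatible with the fusion, i.e. doesn't disturb anything at or below the already-committed splitting levels $L_k$, so that $\bigcap_k p_k$ retains branching at every level of $\bigcup_k L_k$. Both are routine manipulations of Silver trees once stated carefully, but they are exactly the points where the analogy with Sacks forcing breaks down (there, the two subtrees below a splitting node are completely independent), so the write-up should spell out the common-splitting-set thinning explicitly. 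A secondary, purely technical point is to confirm that meeting $D_k$ along every branch of the final tree really does follow from meeting it along every branch of $p_{k+1}$ at the graft level: since $p' | u \subseteq p_{k+1} | u$ and the grafted $q_j$ was contained in an element of $D_k$ while $D_k$ is open (downward closed as a forcing-dense-open set), $p'|u$ — or rather the element of $G_x$ just below it — lies in $D_k$ as well.
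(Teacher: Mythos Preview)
The overall fusion strategy is right, and you correctly isolate the Silver-specific difficulty, but the reconciliation step you call ``routine'' is in fact impossible as you have set it up. If the $q_j$ are chosen independently, each below $p_k | t_j$, there is no reason for the intersection of their splitting sets to be infinite, and even at levels where two of them are both frozen they may be frozen at opposite values; in either case no further thinning of the $q_j$'s produces a common Silver condition extending all of them. Concretely, take $p_k$ to be the full binary tree, $t_0=\langle 0\rangle$, $t_1=\langle 1\rangle$, and let $q_0$ split exactly at the even levels above $0$ while $q_1$ splits exactly at the odd levels: the splitting sets are disjoint, so no Silver $p_{k+1}$ satisfies $p_{k+1} | t_j\le q_j$ for both $j$.

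The fix---which is exactly what the paper does---is to interleave the shrinking with the copying rather than separate them. Find $q_0\le p_k | t_0$ in $D_k$; immediately replace the subtree below every other $t_i$ by the translate of $q_0$, obtaining a genuine Silver tree $\tilde p$; now find $q_1\le \tilde p | t_1$ in $D_k$; copy $q_1$ to all positions; and continue through all $2^{k}$ nodes. Because each $q_{j+1}$ is chosen below the already-copied $q_j$, Silverness is automatic at every stage, and since $D_k$ is open the earlier positions remain in $D_k$ after the later shrinkings. (Your side remark that the $q_j$ need not lie in $M$ is correct; only $D_k\in M$, hence $D_k$ dense in $V$ by elementarity, is used.)
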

\begin{proof}
Let $\langle U_n : n \in \omega \rangle$
 be an enumeration of the dense subsets of
 $\mbb{V}$ that are in $M$.
%We will use the language
% ``shrink a given condition'' and it is understood
% that we are replacing one condition with another.
We will define a decreasing sequence of conditions
 $p = p_{-1} \ge p_0 \ge p_1 \ge ...$
 in $M$.
Now fix $n \ge 0$ and
 suppose we have defined this sequence for
 $p_{-1} \ge ... \ge p_{n-1}$.
We will define $p_n$.
Let $\langle t_n^i : i < 2^n \rangle$
 be the nodes on the $n$-th splitting level
 of $p_{n-1}$.
First shrink $p_{n-1} | t^0_n$
 to be within $U_n$,
 calling the resulting condition
 $p_{n-1}^0$.
This shrinking is possible because
 $p_{n-1}$ is in $M$.
Then for each $i \not= 0$,
 replace $p_{n-1}^0$ below $t^i_n$ with
 $p_{n-1}^0$ below $t^0_n$.
Call the resulting condition
 $\tilde{p}_{n-1}^0$.
Then shrink
 $\tilde{p}_{n-1}^0 | t^1_n$ to be within $U_n$,
 calling the resulting condition
 $p_{n-1}^1$.
Then for each $i \not= 1$,
 replace $p_{n-1}^1$ below $t^i_n$ with
 $p^1_{n-1}$ below $t^1_n$.
Call the resulting condition
 $\tilde{p}_{n-1}^1$.
Continue this for all $i < 2^n$.
After all this shrinking,
 let $p_n := \tilde{p}_{n-1}^{2^n-1}$.
Now unfix $n$.
Note that $p_n \in \mbb{V}$.

We have now constructed the sequence
 $p = p_{-1} \ge p_0 \ge p_1 \ge ...$
 with the property that for each $n \in \omega$,
 each branch through $p_n$ is a path
 through some element of $U_n$.
Let $p' = \bigcap_{n \in \omega} p_n$.
Then each branch through $p'$
 is a branch through an element of each $U_n$.
Hence, each branch through $p'$ is
 $\mbb{V}$-generic over $M$.
\end{proof}

\begin{theorem}
Consider Silver forcing $\mbb{V}$.
There is some $\mc{\dot{X}}$ such that
 there is no $p$ and $\mc{W}$
 such that
 $p \forces \check{\mc{W}}$
 is a shrink wrapper for $\dot{\mc{X}}$.
\end{theorem}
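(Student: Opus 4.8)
The plan is to exhibit a name $\dot{\mc{X}}$ for a sequence of reals in the Silver extension such that no shrink wrapper for it can exist in the ground model. The natural candidate is to let $\dot{\mc{X}} = \langle \dot{x}_n : n \in \omega \rangle$ where the $\dot{x}_n$ are ``spread out'' images of the Silver generic, arranged so that the map $(n_1, n_2) \mapsto [\dot{x}_{n_1} = \dot{x}_{n_2}]$ is not decided by any condition. Concretely, I would use the generic $g \in {}^\omega 2$ to code, for each $n$, a real $\dot{x}_n$, and I would make the equality pattern among the $\dot{x}_n$ depend on $g$ in a way that cannot be captured by any tree in the ground model. A clean choice: let $\dot{x}_n$ be $g$ itself shifted or coded so that $\dot{x}_{n_1} = \dot{x}_{n_2}$ holds iff the generic decides some statement tying $n_1$ to $n_2$; the point is that the equivalence relation $\{(n_1,n_2) : x_{n_1} = x_{n_2}\}$ generically avoids the ground model, and moreover no two $x_n$ are forced to be ground model reals or forced unequal with perpendicular stems in a way consistent with obeying the fixed functions $f_i$.

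**First I would** fix such a $\dot{\mc{X}}$ and suppose toward a contradiction that $p \in \mbb{V}$ and $\mc{W} = \langle \mc{F}, I \rangle$ (in $V$, the ground model) satisfy $p \forces \check{\mc{W}}$ is a shrink wrapper for $\dot{\mc{X}}$. **Then I would** pass to a countable elementary submodel $M \prec V$ with $p, \dot{\mc{X}}, \mc{W} \in M$, and apply Lemma~\ref{allbranches} to get $p' \le p$ (not in $M$) all of whose branches are $\mbb{V}$-generic over $M$. The key feature of Silver forcing we exploit is its rigidity on the non-splitting levels: any two branches $x_1, x_2 \in [p']$ agree on all coordinates outside the splitting set $L$, so they are ``close'' in a controlled way, yet they can be made to differ first at an arbitrarily high splitting level. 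Because $p'$ has branches generic over $M$, for each such branch $b$ we get a generic filter $G_b$ and an interpretation $\mc{X}^{G_b}$, and the shrink wrapper $\mc{W} \in M$ must, in $M[G_b] = M[b]$, satisfy all three clauses of Definition~\ref{srdef} for $\mc{X}^{G_b}$.

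**The contradiction will come** from clause 3c together with the branching structure. Fix a pair index $\tilde{n}$ with $\eta(\tilde{n}) = \{n_1, n_2\}$. For two distinct branches $b, b'$ of $p'$ that split for the first time at a splitting level $\ell$ larger than $\tilde{n}$ and larger than all the finitely many exit levels witnessing clause 3c for this $\tilde{n}$, consider the generics $G_b$ and $G_{b'}$. By clause 2, in $M[b]$ there is $s_1 \in {}^{\tilde{n}}2$ with $x_{n_1}^{G_b} \in [F_{\tilde{n},n_1}(s_1)]$, and in $M[b']$ there is $s_2$ with $x_{n_2}^{G_{b'}} \in [F_{\tilde{n},n_2}(s_2)]$; since $\tilde{n}$ is below $\ell$, the finitely many relevant conditions and values are decided by $b \restriction \ell = b' \restriction \ell$, so the \emph{same} $s_1, s_2$ and the \emph{same} trees are forced by a common initial part. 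But then clause 3 applies to $C_1 = [F_{\tilde{n},n_1}(s_1)]$, $C_2 = [F_{\tilde{n},n_2}(s_2)]$ with a fixed witness level, while the rigidity of Silver forcing lets me choose the generics so that $x_{n_1}$ and $x_{n_2}$ are equal (forcing 3a) in one configuration and unequal with first difference pushed past every candidate level (forbidding 3b and 3c) in another — the crucial point being that whether 3a applies depends on the equality pattern of $\mc{X}$, which is not in $M$, whereas the trees $C_1, C_2$ are in $M$ and hence are ``committed'' before the generic reveals the pattern.

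**The main obstacle** I expect is engineering $\dot{\mc{X}}$ precisely enough that all three escape routes are simultaneously blocked: 3a needs the equality relation of $\mc{X}$ to be generically undetermined (so $x_{n_1}, x_{n_2}$ can be equal or unequal while living in the same pair of ground-model trees); 3b needs the $x_n$ to generically \emph{not} lie in $I(n)$ (arrange $\dot{x}_n \forces \dot{x}_n \notin \check{V}$, or at least not in any fixed finite ground-model set — a density argument against the countably many possible $I$'s in $M$ handles this, or one simply builds $\dot{x}_n$ to be a nontrivial function of $g$); and 3c needs, for a suitably chosen pair of generics over $M$, the relevant trees $C_1, C_2$ to have nonempty intersection (because the $x_n$'s exit every ground-model tree only very late, forced by the Silver rigidity on non-splitting coordinates). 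Balancing the ``generic over $M$ on splitting levels'' freedom against the ``rigid agreement on non-splitting levels'' constraint is exactly what makes Silver forcing fail where Sacks forcing succeeds, and writing this balance out carefully — choosing the splitting level $\ell$ large enough relative to $\tilde{n}$, the exit levels in clause 3c, and the finitely many $s$-values — is the technical heart of the argument. Once that is set up, the contradiction is that a single triple $(s_1, s_2, \text{witness level})$ drawn from $M$ cannot serve both the ``equal'' and the ``far apart'' configurations of the generic.
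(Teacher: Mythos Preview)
Your high-level framework is right and matches the paper: define a specific $\dot{\mc{X}}$, assume a ground-model $\mc{W}$ works below some $p$, pass to a countable $M \prec V$ containing everything, and use Lemma~\ref{allbranches} to get $p' \le p$ all of whose branches are $\mbb{V}$-generic over $M$. The rigidity of Silver trees on non-splitting levels is indeed the engine. But two points in your argument do not go through as written.

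First, the sentence ``since $\tilde{n}$ is below $\ell$, the finitely many relevant conditions and values are decided by $b \restriction \ell = b' \restriction \ell$, so the same $s_1, s_2$ and the same trees are forced by a common initial part'' is wrong. The statement $x_{n_1} \in [F_{\tilde{n},n_1}(s_1)]$ concerns the entire real $x_{n_1}$, not a finite initial segment of it; which $s_1$ witnesses clause~2) can depend on the whole generic, not on $b \restriction \ell$. So you cannot conclude that the two generics pick out the same pair $(s_1,s_2)$, and your comparison across configurations collapses. Relatedly, your plan to ``forbid 3c by pushing the first difference of $x_{n_1},x_{n_2}$ past every candidate level'' only yields a contradiction if $x_{n_1}$ and $x_{n_2}$ land in the \emph{same} pair of trees $C_1,C_2$ in that configuration, which you have not secured.

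Second, you never actually specify $\dot{\mc{X}}$; ``$g$ shifted or coded so that equalities are undecided'' is not enough. The paper's construction is concrete and is exactly what makes the argument close. Let $\dot r$ be the Silver generic real and set $\mbox{Flatten}(r,n)(i)=0$ for $i\le n$ and $=r(i)$ otherwise. Define $\dot{x}_{2n} = \mbox{Flatten}(\dot r,n)$ and $\dot{x}_{2n+1}=\vec{0}$ when $\dot r(n)=0$, and swap these when $\dot r(n)=1$. Now take $n=|\mbox{Stem}(p')|$, let $r_0,r_1$ be the leftmost branches through $p'$ below $\mbox{Stem}(p')^\frown 0$ and $\mbox{Stem}(p')^\frown 1$, and set $u:=\mbox{Flatten}(r_0,n)=\mbox{Flatten}(r_1,n)$. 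The point is that $(\dot{x}_{2n})_{r_0}=u=(\dot{x}_{2n+1})_{r_1}$: the \emph{same} real $u$ appears as $x_{2n}$ in $M[r_0]$ and as $x_{2n+1}$ in $M[r_1]$. Clause~2) then gives $u\in[T_1]$ (from $M[r_0]$, some $s_1$) and $u\in[T_2]$ (from $M[r_1]$, some $s_2$), so $[T_1]\cap[T_2]\ne\emptyset$ is a ground-model fact regardless of whether $s_1,s_2$ match. This kills 3c) directly. Since $u\notin M$ (it is a tail of a generic), 3b) fails. Finally 3a) is applied \emph{inside a single extension} $M[r_0]$: there $x_{2n}=u\in[T_1]=C_1$, so 3a) forces $x_{2n}=x_{2n+1}$ in $M[r_0]$; but by construction $(\dot{x}_{2n+1})_{r_0}=\vec 0\ne u$, a contradiction. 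The trick you are missing is precisely this: do not try to pin down $s_1,s_2$ across two generics; instead arrange for one real $u$ to be pushed into both trees via two different generics, and then read off the contradiction from clause 3a) in just one of the two extensions.
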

\begin{proof}
Given a function $r : \omega \to 2$
 and $n \in \omega$,
 let
 $\mbox{Flatten}(r,n) : \omega \to 2$ be the function
 $$\mbox{Flatten}(r,n)(i) :=
 \begin{cases}
 0 & \mbox{if } i \le n, \\
 r(i) & \mbox{otherwise.}
 \end{cases}$$
Let $\dot{r}$ be the canonical name
 for the generic real.
We have $1 \forces \dot{r} : \omega \to 2$.
Let $\vec{0} \in {^\omega 2}$ be the
 constant zero function.
Let $\langle \dot{x}_n
 \in {^\omega 2} : n \in \omega \rangle$
 be a sequence of names such that for
 each $n \in \omega$,
 $$1 \forces \dot{x}_{2n} =
 \begin{cases}
 \mbox{Flatten}(\dot{r},n)
  & \mbox{if } \dot{r}(n) = 0, \\
 \vec{0} & \mbox{if } \dot{r}(n) = 1,
 \end{cases}$$
 and 
 $$1 \forces \dot{x}_{2n+1} =
 \begin{cases}
 \vec{0} & \mbox{if } \dot{r}(n) = 0, \\
 \mbox{Flatten}(\dot{r},n) &
 \mbox{if } \dot{r}(n) = 1.
 \end{cases}$$
That is, one of
 $\dot{x}_{2n}$ and $\dot{x}_{2n+1}$ will
 be a final segment of the generic real
 with initial zeros, and the other will
 the constant zero function.
Define $\dot{\mc{X}}$ such that
 $$1 \forces \dot{\mc{X}} =
 \langle \dot{x}_n : n \in \omega \rangle.$$
Suppose there is some condition $p$
 and some $\mc{W} = \langle \mc{F}, I \rangle$
 such that
 $$p \forces \check{\mc{W}}
 \mbox{ is a shrink wrapper for } \dot{\mc{X}}.$$
We will find a contradiction.

Let $M$ be a countable elementary substructure
 of $V$ such that $p, \mc{W},
 \dot{\mc{X}} \in M$.
By Lemma~\ref{allbranches}, let $p' \le p$
 be such that all branches through $p'$ are
 $\mbb{V}$-generic over $M$.
Let $n := |\mbox{Stem}(p')|$.
Let $\tilde{n} \in \omega$ be such that
 $\{ 2n, 2n+1 \} = \eta(\tilde{n})$.
That is, $\{ 2n, 2n+1 \}$ is the
 $\tilde{n}$-th pair.

Let $r_0$ be the leftmost
 branch through $p' | (\mbox{Stem}(p') ^\frown 0)$
 and let $r_1$ be the
 leftmost branch through
 $p' | (\mbox{Stem}(p') ^\frown 1)$.
Hence, $r_0(l) = r_1(l)$ for all $l \not= n$.
%Let $u : \omega \to 2$ be such that
% $r_0 = \mbox{Stem}(p') ^\frown 0 ^\frown u$ and
% $r_1 = \mbox{Stem}(p') ^\frown 1 ^\frown u$.
Let $u : \omega \to 2$ be such that
%Note that
 $$u =
  \mbox{Flatten}(r_0, n) =
  \mbox{Flatten}(r_1, n).$$
Note that $u \not\in M$.

Now,
 $$M \models p \forces
  \check{W} \mbox{ is a shrink wrapper for }
  \dot{\mc{X}}.$$
Given a name $\dot{\tau}$ and a generic filter
 $G$, let $\dot{\tau}_G$ refer to the valuation
 of $\dot{\tau}$ with respect to $G$.
Since $r_0$ and $r_1$ are both paths through
 $p'$, they are generic over $M$.
Thus, we have
 $$M[r_0] \models \mc{W}
 \mbox{ is a shrink wrapper for }
 \dot{\mc{X}}_{r_0}.$$
By part 2) of Definition~\ref{srdef}, we have
 $$M[r_0] \models (\exists s_1 \in {^{\tilde{n}} 2})
  (\dot{x}_{2n})_{r_0} \in [F_{\tilde{n},2n}(s_1)].$$
Fix this $s_1$.
Let $T_1 := F_{\tilde{n},2n}(s_1)$.
We have
 $$(\dot{x}_{2n})_{r_0}^{M[r_0]} \in [T_1].$$
Similarly, we have
 $$M[r_1] \models
 (\exists s_2 \in {^{\tilde{n}} 2})
 (\dot{x}_{2n+1})_{r_1} \in
 [F_{\tilde{n}, 2n+1}(s_2)].$$
Fix this $s_2$.
Let $T_2 = F_{\tilde{n}, 2n+1}(s_2)$.
We have
 $$(\dot{x}_{2n+1})_{r_1}^{M[r_1]} \in [T_2].$$
Here is the crucial part:
 by the definition of $r_0, r_1$, and $\dot{\mc{X}}$,
 since $r_0(n) = 0$ and $r_1(n) = 1$,
 we have
 $$(\dot{x}_{2n})_{r_0}^{M[r_0]} =
 \mbox{Flatten}(r_0, n) =
 u =
 \mbox{Flatten}(r_1, n) =
 (\dot{x}_{2n+1})_{r_1}^{M[r_1]}.$$
Thus, we have
 $$[T_1] \cap [T_2] \not= \emptyset.$$
Since $\mc{W} \in M[r_0]$, by absoluteness we have
 $$M[r_0] \models [T_1] \cap [T_2] \not= \emptyset.$$
Working in $M[r_0]$,
 by part 3) of Definition~\ref{srdef}
 applied to $C_1 := [T_1]$ and $C_2 := [T_2]$,
 it must be that either 3a) or 3b) holds.
Note that
 $$(\dot{x}_{2n})_{r_0}^{M[r_0]} = u \not\in M,$$
 which implies that 3b) cannot hold.
Since $(\dot{x}_{2n})_{r_0}^{M[r_0]} \in [T_1]$,
 using 3a) we have that
 $$M[r_0] \models
 (\dot{x}_{2n})_{r_0} =
 (\dot{x}_{2n+1})_{r_0}.$$
Thus,
 $$(\dot{x}_{2n})_{r_0}^{M[r_0]} =
 (\dot{x}_{2n+1})_{r_0}^{M[r_0]}.$$
We already know that the left hand side
 of this equation is $u$.
On the other hand, by definition,
 the right hand side must be $\vec{0}$.
This is a contradiction.
\end{proof}

\end{document}